\def\titlerunning#1{\gdef\titrun{#1}}
\def\author#1{\gdef\autrun{\def\and{\unskip, }#1}\gdef\@author{#1}}
\def\address#1{{\def\and{\\\hspace*{18pt}}\renewcommand{\thefootnote}{}%
\footnote {#1}}%
\markboth{\autrun}{\titrun}}
\def\email#1{e-mail: #1}
\def\subjclass#1{{\renewcommand{\thefootnote}{}%
\footnote{\emph{Mathematics Subject Classification (2020):} #1}}}
\def\keywords#1{\par\medskip
\noindent\textbf{Keywords.} #1}
\newtheorem{thm}{Theorem}
\newtheorem{prop}{Proposition}%[section]
\newtheorem{lem}{Lemma}%[section]
\newtheorem{de}{Definition}%[section]
\newtheorem{re}{Remark}%[section]
\numberwithin{equation}{section}
\DeclareMathOperator*{\supp}{supp}
\newcommand{\g}{\gamma}
\newcommand{\R}{\mathbb{R}}
\newcommand{\T}{\mathbb{T}}
\begin{document}
\baselineskip=15pt

\titlerunning{First Order Discounted Mean Field Games}
\title{Existence of solutions to contact mean field games of first order}
\author{Xiaotian Hu \and Kaizhi Wang}

\maketitle

\address{Xiaotian Hu: School of Mathematical Sciences, Shanghai Jiao Tong University, Shanghai 200240, China;
\email{sjtumathhxt@sjtu.edu.cn}
\and 
Kaizhi Wang (Corresponding author): School of Mathematical Sciences, Shanghai Jiao Tong University, Shanghai 200240, China;
\email{kzwang@sjtu.edu.cn}
}
\subjclass{37J51,35Q89}

%%%%%%%%%%%%%%%%%%%%%%%%%
\begin{abstract}
This paper deals with the existence of solutions of a class of contact mean field games systems of first order. 
Cardaliaguet \cite{CAR} found a link between the weak KAM theory for Hamiltonian systems and mean field games systems.  We prove that there is still a connection between the weak KAM theory for contact Hamiltonian systems and contact mean field games systems. By the analysis of properties of the Mather set for contact Hamiltonian systems, we prove the main existence result.

\keywords{mean field games; weak KAM theory; contact Hamiltonian systems; existence}
\end{abstract}
%%%%%%%%%%%%%%%%%%%%%%%%%

%\tableofcontents

%\newpage

%\linenumbers
%%%%%%%%%%%%%%%%%%%%%Sect. 1
\section{Introduction}
\setcounter{equation}{0}
The mean field games system was introduced by Lasry and Lions \cite{bib:LL1,bib:LL2,bib:LL3} and Caines, Huang and Malham\'e \cite{bib:HCM1,bib:HCM2}. In this paper we only discuss first-order mean field games systems. It is a coupled system of partial differential equations, one Hamilton-Jacobi equation and one continuity equation. From the view of control theory, a Hamilton-Jacobi equation with an external mean field term is standard. The mean field term involves a probability distribution governed by a continuity  equation, which depends on the feedback and on the viscosity solution \cite{CL} of the Hamilton-Jacobi equation. The idea of equilibrium states in the mean field games theory, which are distributed along optimal trajectories generated from the feedback strategy, is quite  enlightening.

In some situations, the ergodic mean field games system
\begin{equation}\label{lab1}
	\begin{cases}
		\  K(x, Du)=F(x, m)+c(m) & \text{in} \quad X, \\ \ \text{div}\Big(m \frac{\partial K}{\partial p}(x, Du)\Big)=0  & \text{in} \quad X,  \\ \int_{X}{m\ dx}=1
	\end{cases}
\end{equation}  
can be described as the limit system of a finite time ($T>0$) horizon mean field games system
\begin{equation*}\label{lab2}
	\begin{cases}
		\ -\partial _{t} u^{T} + K(x, Du^{T})=F(x, m^{T}(t)) & \text{in} \quad (0,T)\times X, \\ \  \partial _{t}m^{T}-\text{div}\Big(m^{T}\frac{\partial K}{\partial p}(x, Du^{T})\Big)=0  & \text{in} \quad (0,T)\times X,  \\ \ m^{T}(0)=m_{0}, \quad u^{T}(T,x)=u^{f}(x), & x\in X
	\end{cases}
\end{equation*}
as $T$ goes to infinity.  
See \cite{CAR,CCMW,CCMW1} for this kind of results, where the state space $X$ is $\T^n=\R^n/\mathbb{Z}^n$, $\R^n$ and $\Omega\subset\R^n$, respectively. 
Obviously, ensuring the existence of solutions to the ergodic mean field games system is an important issue. 
Cardaliaguet \cite{CAR} discovered the link between the weak KAM theory and mean field games system \eqref{lab1}, and got an existence result for solutions to \eqref{lab1} for $X=\T^n$.

Let us see how to get a solution of \eqref{lab1} from the weak KAM point of view. In this paper we always use $M$ to denote 
a connected, closed (compact, without boundary) and  smooth manifold endowed with a Riemannian metric. A simple example is $M=\T^n$. Denote by $\mathrm{diam}(M)$ the diameter of $M$. We will denote by $(x,v)$ a point of the tangent bundle $T M$ with $x \in M$ and $v$ a vector tangent at $x .$ The projection $\pi: T M \rightarrow M$ is  $(x, v) \to x .$ The notation $(x, p)$ will designate a point of the cotangent bundle $T^{*} M$ with $p \in T_{x}^{*} M .$  Consider a Hamiltonian $K=K(x,p): T^*M \to \mathbb{R}$ which is $C^{2}$, superlinear and strictly convex in $p$.
Such a Hamiltonian is called a Tonelli Hamiltonian.
We can associate with $K$ a Lagrangian, as a function on $TM: l(x,v)=\sup _{p \in T^*_{x} M}\left\{\langle p, v\rangle_{x}-K(x,p)\right\}$, where $\langle\cdot, \cdot\rangle_{x}$ represents the canonical pairing between
the tangent and cotangent space. Sometimes, we use $p\cdot v$ to denote $\langle p, v\rangle_{x}$ for simplicity.

Let $\mu$ be a Mather measure \cite{Mat91} for the Euler-Lagrange equation 
\begin{align}\label{K}
	\frac{d}{dt}\frac{\partial l}{\partial v}=\frac{\partial l}{\partial x}.
\end{align}
Then $\mu$ is a closed measure (see, for instance, \cite{Ber}), i.e., 
\[
\int_{TM}v\cdot D\varphi(x)d\mu=0,\quad \forall \varphi\in C^1(M)，
\] 
where $C^k(M)$ $(k\in\mathbb{N})$ stands for the function space of continuously differentiable functions on $M$.
Recall that 
\[
\supp(\mu)\subset \tilde{\mathcal{A}}=\bigcap_{(u_-,u_+)}\{(x,v)\in TM\ : \ u_-(x)=u_+(x), Du_-(x)=Du_+(x)=\frac{\partial l}{\partial v}(x,v)\},
\]
where $\tilde{\mathcal{A}}$ is the Aubry set for Lagrangian system \eqref{K}, and the intersection is taken on the pairs $(u_-,u_+)$ of conjugate functions, i.e., $u_-$ (resp. $u_+$) is a backward (resp. forward) weak KAM solution of 
\begin{align}\label{lab3}
K(x,Du)=c(K)
\end{align}
and $u_-=u_+$ on the projected Mather set $\mathcal{M}$ of  system \eqref{K}. Here, $\mathcal{M}:=\pi \tilde{\mathcal{M}}$, where  $\tilde{\mathcal{M}}$ is the union of supports of Euler-Lagrange flow $\Phi_t^l$-invariant probability measures supported in $\tilde{\mathcal{A}}$, called the Mather set.
The symbol $c(K)$ denotes the Ma\~n\'e critical value of $K$. See Section 2 for the definition and representation formulas of Ma\~n\'e's critical value.
 Let $u_-$ be an arbitrary backward weak KAM solution (or equivalently \cite{Fat-b}, viscosity solution) of equation  \eqref{lab3}, and let $\sigma:=\pi\sharp\mu$, where 
$\pi\sharp\mu$ denotes the push-forward of $\mu$ through $\pi$. Then 
\[
0=\int_{\supp(\mu)}v\cdot D\varphi(x)d\mu=\int_{\supp(\sigma)}\frac{\partial K}{\partial p}(x,Du_-(x))\cdot D\varphi(x)d\sigma=\int_{M}\frac{\partial K}{\partial p}(x,Du_-(x))\cdot D\varphi(x)d\sigma,
\]
which means that $\sigma$ is a solution of the continuity equation 
\[
\mathrm{div}\big(\sigma \frac{\partial K}{\partial p}(x,Du_-)\big)=0
\]
in the sense of distributions. 
In view of the above arguments, one can deduce  that if there is a Borel probability measure $m$ on $M$ such that $l(x,v)+F(x,m)$ admits a Mather measure $\eta_m$ with
\begin{align}\label{lab5}
m=\pi\sharp\eta_m,
\end{align}
then for any viscosity solution $u$ of 
\[
K(x,Du)=F(x,m)+c(m),
\]
where $c(m)$ is the Ma\~n\'e critical value of $K(x,p)-F(x,m)$, the pair $(u,m)$ is a solution of \eqref{lab1}, i.e.,  the Hamilton-Jacobi equation is satisfied in
viscosity sense and the continuity equation is satisfied in the sense of distributions. So, in order to find such a solution of \eqref{lab1}, it suffices to find  a probability measure $m$ satisfying \eqref{lab5}.

In this paper we aim to prove the existence of solutions of the following contact mean field games system
\begin{subequations}\label{1}
	\begin{empheq}[left=\empheqlbrace]{align}
		& H(x,u,Du)=F(x,m) \quad ~~~~~~~\text{in} \quad M,\\
		& \text{div}\Big(m \frac{\partial H}{\partial p}(x,u,Du)\Big)=0   \quad ~~\text{in} \quad M,\\
		& \int_{M}{m \ dx}=1
	\end{empheq}
\end{subequations}
using dynamical approaches. Note that the Hamiltonian $H=H(x,u,p)$ in \eqref{1} is defined on $T^*M\times \R$, where $(x,p)\in T^*M$ and $u\in\R$. For any given probability measure $m$ on $M$, equation (1.5a) can be interpreted as a hypersurface in the space $J^1(M,\R)$ of 1-jets of functions. Since the characteristic equations of (1.5a) is a contact Hamiltonian system, we call \eqref{1} a contact mean field games system. In view of the essential differences between weak KAM results for Hamiltonian and contact Hamiltonian systems, we cannot use the aforementioned idea directly to get the existence of solutions. A more careful analysis of the structure of Mather sets of contact Hamiltonian systems is needed.

We now list some basic assumptions on $H$ and $F$ which will be made in most of the results of this paper.

 Assume that the contact Hamiltonian $H$ is of class $C^3$ and satisfies:
\begin{itemize}
	\item [\textbf{(H1)}] \textbf{Positive Definiteness}: {\it For every $(x,u,p)\in T^*M\times\R$, the second partial derivative $\partial^2 H/\partial p^2 (x,u,p)$ is positive definite as a quadratic form;}
	\item [\textbf{(H2)}] \textbf{Superlinearity}: {\it For every $(x,u)\in M\times\R$, $H(x,u,p)$ is  superlinear in $p$;}
	\item [\textbf{(H3)}] \textbf{Strict Monotonicity}: {\it There are constants $\delta>0$ and $\lambda>0$ such that for every $(x,u,p)\in T^{\ast}M\times\R$,
		\begin{equation*}
			\delta<\frac{\partial H}{\partial u}(x,u,p)\leq\lambda.
		\end{equation*}
	}
\item [\textbf{(H4)}] \textbf{Reversibility}:{\it $H(x,u,p)=H(x,u,-p)$ for all $(x,u,p)\in T^*M\times\mathbb{R}$.}
\end{itemize}

We denote by $\mathcal{P}(M)$ the set of Borel probability measures on $M$, and by $\mathcal{P}(T^*M)$ the set of Borel probability measures on $T^*M$.
Both sets are endowed with
the weak-* convergence. A sequence $\{\mu_k\}_{k\in\mathbb{N}}\in \mathcal{P}(X)$ is weakly-* convergent to $\mu\in \mathcal{P}(X)$, denoted by $\mu_k \stackrel{w^*}{\longrightarrow} \mu$, if 
\[
\lim_{k\to\infty}\int_Xf(x)d\mu_k=\int_Xf(x)d\mu,\quad f\in C_b(X),
\]
where $C_b(X)$ denotes the function space of bounded uniformly continuous functions on $X$ with $X=M,\ T^*M$.
Let us recall that $\mathcal{P}(M)$ is compact for this topology. We shall work with the Monge-Wasserstein distance defined, for any $m_1$, $m_2\in\mathcal{P}(M)$, by
\[
d_1(m_1,m_2)=\sup_{h}\int_M h\ d(m_1-m_2),
\]
where the supremum is taken over all the maps $h:M\to\R$ which are 1-Lipschitz continuous. $\mathcal{P}_1(T^*M)$ denotes the Wassertein space of order 1, the space of probability measures with finite moment of order 1.

Let $F:M\times \mathcal{P}(M) \to \mathbb{R}$ be a function, satisfying the following  assumptions:
\begin{itemize}
	\item[\textbf{(F1)}] for every measure $m \in \mathcal{P}(M)$ the function $x \mapsto F(x,m)$ is of class $C^{2}(M)$ and	
	\begin{equation*}
		F_\infty:=\sup_{m \in \mathcal{P}(M)} \sum_{|\alpha|\leq 1} \| D^{\alpha}F(\cdot, m)\|_{\infty} < +\infty,
	\end{equation*} 
	where $\alpha=(\alpha_1,\cdots,\alpha_n)$,  $D^{\alpha}=D^{\alpha_1}_{x_1}\cdots D^{\alpha_n}_{x_n}$ and $\|\cdot\|_\infty$ denotes the supremum norm;
	\item[\textbf{(F2)}] for every $x \in M$ the function $m \mapsto F(x,m)$ is Lipschitz continuous and 
	$$\displaystyle{\sup_{\substack{x\in M\\ m_1,\ m_2 \in \mathcal{P}(M) \\ m_1\neq m_2 } }}\frac{|F(x,m_1)-F(x,m_2)|}{d_{1}(m_1, m_2)} < +\infty.$$
\end{itemize}

\begin{de}\label{ers}
A solution of the contact mean field games system \eqref{1} is a couple $(u, m) \in C(M) \times \mathcal{P}(M)$ such that (1.5a) is satisfied in distributions sense and (1.5b) is satisfied in viscosity sense.
\end{de}

The main result is stated as follows.
\begin{thm}\label{ma}
	Assume (H1)-(H4) and (F1), (F2). There exists at least one  solution $(u,m)$ of the contact mean field games system \eqref{1}, which has clear dynamical meaning. More precisely, there is a Mather measure $\mu_m$ for the contact Hamiltonian system \eqref{cm} such that $m=\pi_x\sharp\mu_m$, where $\pi_x:T^*M\times \R\to M$ denotes the canonical projection. 
\end{thm}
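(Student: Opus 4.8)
The plan is to realize the heuristic of the Introduction as a fixed-point scheme on $\mathcal{P}(M)$, with the classical Tonelli weak KAM ingredients replaced by the contact analogues developed in this paper.

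\emph{The map.} For fixed $m\in\mathcal{P}(M)$, the Hamiltonian $(x,u,p)\mapsto H(x,u,p)-F(x,m)$ is again of contact Tonelli type with $\partial_uH\in(\delta,\lambda]$ by (H1)--(H3) and (F1), so the comparison principle furnishes a \emph{unique} viscosity solution $u_m\in C(M)$ of (1.5a); in contrast with \eqref{lab1}, no additive ergodic constant is needed, precisely because of strict monotonicity. I would then attach to this equation, via the contact weak KAM results of Section 2, the Mather set $\tilde{\mathcal{M}}_m$ of the contact system \eqref{cm} for $H-F(\cdot,m)$, and use the structural facts proved there: $u_m$ is differentiable at every point of the projected Mather set $\mathcal{M}_m=\pi_x\tilde{\mathcal{M}}_m$, and $\tilde{\mathcal{M}}_m\subset\{(x,u_m(x),Du_m(x)):x\in\mathcal{M}_m\}$. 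Define the set-valued map $\Psi:\mathcal{P}(M)\rightrightarrows\mathcal{P}(M)$ by $\Psi(m):=\{\pi_x\sharp\mu_m:\ \mu_m\text{ a Mather measure for }H-F(\cdot,m)\}$. Since the Mather measures form a nonempty convex set of contact-flow-invariant probability measures and $\pi_x\sharp$ is affine and weak-$*$ continuous, each $\Psi(m)$ is nonempty, convex and weak-$*$ compact.

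\emph{Upper semicontinuity and fixed point.} I would show that $\Psi$ has closed graph. If $m_k\to m$ in $\mathcal{P}(M)$, then by (F1)--(F2) the family $\{F(\cdot,m_k)\}$ is bounded in $C^2(M)$ and $F(\cdot,m_k)\to F(\cdot,m)$ uniformly, hence in $C^1(M)$; by stability of viscosity solutions $u_{m_k}\to u_m$ uniformly, and, using the stability of the associated conjugate weak KAM pairs, the Aubry and Mather sets vary semicontinuously. If $\mu_{m_k}$ are Mather measures with $\pi_x\sharp\mu_{m_k}\to\sigma$, then a subsequence of $\mu_{m_k}$ converges weakly-$*$ to a probability measure $\mu$ invariant under the contact flow of $H-F(\cdot,m)$ and supported in $\tilde{\mathcal{M}}_m$; a lower-semicontinuity argument for the relevant action functional shows $\mu$ is itself a Mather measure for $H-F(\cdot,m)$, so $\sigma=\pi_x\sharp\mu\in\Psi(m)$. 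As $\mathcal{P}(M)$ is a convex compact subset of the locally convex space of signed measures, the Kakutani--Fan--Glicksberg theorem yields $m\in\Psi(m)$, i.e. a Mather measure $\mu_m$ for \eqref{cm} with $m=\pi_x\sharp\mu_m$.

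\emph{Verification.} It remains to check that $(u_m,m)$ solves \eqref{1} in the sense of Definition \ref{ers}. Equation (1.5a) holds in the viscosity sense by the choice of $u_m$. For (1.5b), invariance of $\mu_m$ under the contact flow $\Phi_t$ gives, for every $\varphi\in C^1(M)$,
\[
0=\frac{d}{dt}\Big|_{t=0}\int \varphi\big(\pi_x\Phi_t(x,u,p)\big)\,d\mu_m=\int \frac{\partial H}{\partial p}(x,u,p)\cdot D\varphi(x)\,d\mu_m,
\]
and since $\supp(\mu_m)$ lies in the $1$-graph of $u_m$ and $m=\pi_x\sharp\mu_m$ is carried by the set where $u_m$ is differentiable, this becomes $\int_M \frac{\partial H}{\partial p}(x,u_m(x),Du_m(x))\cdot D\varphi(x)\,dm=0$, i.e. the continuity equation holds in the sense of distributions. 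I expect the main obstacle, and the technical heart of the argument, to be the contact Mather set analysis used in the first step (the localization $\tilde{\mathcal{M}}_m\subset\{(x,u_m,Du_m)\}$ and differentiability of $u_m$ on $\mathcal{M}_m$) together with the stability of Mather measures in the second step: since the contact flow preserves no symplectic form and the energy is not conserved along orbits, neither property is inherited from the classical theory, and assumptions (H1)--(H4) — in particular the reversibility (H4) — are exactly what make those structural results available.
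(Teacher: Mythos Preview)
Your overall architecture matches the paper's exactly: define the set-valued map $\Psi(m)=\{\pi_x\sharp\mu:\mu\text{ Mather measure for }H_m\}$, check nonempty convex values and closed graph, apply Kakutani, then verify the continuity equation via flow-invariance on the $1$-graph of $u_m$. The verification step is essentially identical to the paper's.

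The genuine gap is in your closed-graph argument. You write that ``a lower-semicontinuity argument for the relevant action functional shows $\mu$ is itself a Mather measure,'' but in the contact setting Mather measures are \emph{not} defined as action minimizers: by definition they are $\Phi^{H_m}_t$-invariant probability measures supported in the Aubry set $\tilde{\mathcal{A}}_{H_m}$. There is no action functional here whose lower-semicontinuity does the job, and the semicontinuity of Aubry/Mather sets in $m$ that you appeal to is not provided by the references in Section~2. Your localization $\tilde{\mathcal{M}}_m\subset\{(x,u_m(x),Du_m(x))\}$ is correct but too weak to close the argument on its own.

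The paper fills this gap by exploiting (H4) far more concretely than you indicate. Proposition~\ref{Mat} shows that under (H1)--(H4) the entire Mather set is
\[
\tilde{\mathcal{M}}_{H_m}=\mathcal{K}_{H_m}:=\{(x,u_m(x),0):H(x,u_m(x),0)=F(x,m)\},
\]
a set consisting solely of \emph{fixed points} of the contact flow. This buys three things at once: (i) any probability measure supported in $\mathcal{K}_{H_m}$ is automatically invariant, hence a Mather measure, so $\Psi(m)$ is simply all pushforwards of such measures; (ii) tightness of $\{\eta_{m_j}\}$ is immediate since all supports lie in the fixed compact set $M\times[-D_3,D_3]\times\{0\}$; (iii) the closed-graph property reduces to the elementary observation that if $(x_j,u_{m_j}(x_j),0)\to(x_0,u_0,0)$ with $H(x_j,u_{m_j}(x_j),0)=F(x_j,m_j)$, then $u_0=u_m(x_0)$ and $H(x_0,u_m(x_0),0)=F(x_0,m)$, which follows from the uniform convergence $u_{m_j}\to u_m$ (Proposition~\ref{key3}) and continuity of $H,F$. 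No Aubry-set semicontinuity or action argument is needed; reversibility has replaced it with a purely algebraic limit.
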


\begin{re} 
	\item[($i$)] Our methods depend on the analysis of dynamical behavior of the following contact Hamiltonian system:
	\begin{align}\label{cm}
		\left\{
		\begin{array}{l}
			\dot{x}=\frac{\partial H_m}{\partial p}(x,u,p),\\[3mm]
			\dot{p}=-\frac{\partial H_m}{\partial x}(x,u,p)-\frac{\partial H_m}{\partial u}(x,u,p)p,\\[3mm]
			\dot{u}=\frac{\partial H_m}{\partial p}(x,u,p)\cdot p-H_m(x,u,p),
		\end{array}
		\right.
	\end{align}
where $H_m(x,u,p):=H(x,u,p)-F(x,m)$ for all $(x,u,p)\in T^*M\times \R$.
\item [($ii$)] The notion of Mather measures was introduced by Mather in \cite{Mat91} for convex Hamiltonian systems, while the one for convex contact Hamiltonian systems was introduced in \cite{WWY2}, where part of Aubry-Mather and weak KAM theories for Hamiltonian systems was extended  to contact Hamiltonian systems under assumptions (H1), (H2) and strict increasing condition in the argument $u$. See \cite{MS,MS1} for Aubry-Mather and weak KAM results for discounted Hamiltonian systems.
\item[($iii$)] $u$ is Lipschitz and thus differential almost everywhere. Furthermore, $u$ is of class $C^{1,1}$ on the Mather set of system \eqref{cm}, and thus $Du(x)$ exists for $m$-a.e. $x \in M$ \cite[Proposition 4.2]{WWY2}.

\item[($iv$)] $m$ satisfies (1.5b) in the sense of distributions, that is,
$$ \int_{M}{\big\langle D\varphi(x), \frac{\partial H}{\partial p}\left(x,u(x), Du(x)\right) \big\rangle_x\ d m(x)}=0, \quad  \forall \varphi \in C^{1}(M). 
$$ 
\item [($v$)] Let $K(x,p)$ be a Tonelli Hamiltonian. Mean field games systems where the Hamiltonian has the following form
\[
\bar H(x,u,p)=u+K(x,p)
\]
appear in certain free-market economy models, see for instance \cite{Gomes}. It is clear that $\bar H(x,u,p)$ is a specific example of the Hamiltonians satisfying (H1)-(H3). To the best of our knowledge, Theorem \ref{ma} is the first step towards understanding contact mean field games systems from a dynamical point of view. 
\end{re}

See, for example, \cite{Car,Car1,Car2,Go} for recent progresses on first-order mean field games.

%%%%%%%%%%%%%%%%%%%%%%%%%%%%%%%%%%%%%%%%%%%%%

\section{Weak KAM results for Hamiltonian and contact Hamiltonian systems}

We recall some weak KAM type results for Tonelli Hamiltonian systems and Tonelli contact Hamiltonian systems. Results in Section 2.1 come from \cite{Fat-b}, and the ones in Section 2.2 come from \cite{WWY1,WWY2}.
\subsection{Weak KAM results for Hamiltonian systems}

\noindent $\bullet$ {\bf Ma\~n\'e's critical value.}\ 

Let $K$ denote a Tonelli Hamiltonian on $T^*M$ and let $l$ denote the associated Tonelli Lagrangian on $TM$ as in the Introduction.
If $[a, b]$ is a finite interval and $\gamma:[a, b] \rightarrow M$ is an absolutely continuous curve, we define its $l$ action as
$$
A_{l}(\gamma)=\int_{a}^{b} L(\gamma(s), \dot{\gamma}(s)) ds.
$$
The following estimate for action $A_l(\cdot)$ will be used later.
\begin{prop}(\cite[Proposition 4.4.4]{Fat-b})\label{bound}
For every given $t>0$, there exists a constant $C_{t}<+\infty$, such that, for each $x, y \in M$, there is a $C^{\infty}$ curve $\gamma:[0, t] \rightarrow M$ with $\gamma(0)=x, \gamma(t)=y$ and
$A_{l}(\gamma) \leq C_{t}$.
\end{prop}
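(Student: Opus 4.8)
Fix $t>0$. The plan is to produce, for each pair $x,y\in M$, one explicit $C^\infty$ curve joining them in time $t$ whose velocity is bounded by a constant depending only on $t$ and $M$, and then to bound its action by estimating $l$ over the resulting compact set of velocities. Observe that no extremality of the curve is needed: the statement only asks for \emph{some} curve with $A_l(\gamma)\le C_t$, so we are free to pick a convenient one.

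Since $M$ is compact and Riemannian, $D:=\mathrm{diam}(M)<+\infty$ and $M$ is complete, so by the Hopf--Rinow theorem any two points $x,y\in M$ are joined by a minimizing geodesic; let $c:[0,d(x,y)]\to M$ be such a geodesic, parametrized by arc length (a constant curve if $x=y$). Geodesics solve a smooth second-order ODE, hence $c$ is $C^\infty$, and I set
\[
\gamma(s):=c\!\left(\frac{d(x,y)}{t}\,s\right),\qquad s\in[0,t].
\]
Then $\gamma$ is $C^\infty$ on $[0,t]$ (a composition of $c$ with an affine map), $\gamma(0)=x$, $\gamma(t)=y$, and $|\dot\gamma(s)|=d(x,y)/t\le D/t$ for every $s\in[0,t]$.

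The set $\mathcal K_t:=\{(z,v)\in TM:\ |v|\le D/t\}$ is compact because $M$ is, and the associated Tonelli Lagrangian $l$ is continuous on $TM$; hence $A_t:=\max_{\mathcal K_t}l<+\infty$. Since $(\gamma(s),\dot\gamma(s))\in\mathcal K_t$ for every $s$,
\[
A_l(\gamma)=\int_0^t l(\gamma(s),\dot\gamma(s))\,ds\le t\,A_t,
\]
so $C_t:=t\,A_t$, which depends only on $t$, $M$ and $l$ and not on $x,y$, has the required property.

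I do not expect a genuine obstacle here: this is the classical \cite[Proposition 4.4.4]{Fat-b}. The only points deserving care are invoking Hopf--Rinow to get a minimizing geodesic on the compact manifold $M$; checking that the affine reparametrization keeps $\gamma$ of class $C^\infty$ up to the endpoints of $[0,t]$ (it does, since on a complete manifold geodesics extend smoothly past their endpoints); and using that the continuous function $l$ is bounded on the compact set $\mathcal K_t$. A chart-based alternative --- covering $M$ by finitely many coordinate patches and chaining short smooth paths --- would also work, but the geodesic construction is the cleanest.
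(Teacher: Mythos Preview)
Your proof is correct and follows essentially the same route as the paper (which simply cites Fathi for this proposition and gives no proof of its own): the formula for $E_t$ in the analogous Lemma~\ref{b3}, namely $E_t=t\cdot\sup\{L(x,u,v)+F_\infty:\|v\|_x\le \mathrm{diam}(M)/t,\ |u|\le D_1\}$, shows that the intended argument is precisely yours---connect $x$ and $y$ by a curve of constant speed at most $\mathrm{diam}(M)/t$ and bound the Lagrangian on the resulting compact velocity set.
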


The Ma\~n\'e critical value of the Lagrangian $l$, which was introduced by Ma\~n\'e in \cite{Man}, is defined by
$c(l):=\sup\{k \in \R: A_{l+k}(\gamma)<0$ for some absolutely continuous closed curve $\gamma\}$. 
The Ma\~n\'e's critical value has several other   respresentation formulas:
$$
c(l)=\inf _{u\in C^1(M)}\max_{x\in M}K(x,Du(x))=-\inf _{\mu} \int_{TM} l(x,v) d \mu,
$$
where the second infimum is taken with respect to all Borel probability measures on $TM$ invariant by the Euler-Lagrange flow $\Phi^l_t$. Furhtermore, $c(l)$ is the unique value of $e$ for which $K(x,Du)=e$ admits a  viscosity solution. In the following, we also call $c(l)$ the Ma\~n\'e critical value of the Hamiltonian $K$, denoted by $c(K)$.

\noindent $\bullet$ {\bf Weak KAM solutions.}\ 
A backward weak KAM solution  of equation \eqref{lab3} is a function $u: M \to \R$ such that
\begin{itemize}
	\item [(1)] $
	u(x)-u(y) \leq \inf_{s>0}\big\{\inf_{\alpha}A_l(\alpha)+c(K)s\big\},  \forall x, y \in M,
	$
	where  the second infimum is taken over all the absolutely continuous curves $\alpha$ :
	$[0, s] \to M$ with $\alpha(0)=y$ and $\alpha(s)=x;$ 
	\item [(2)] for every $x \in M$ there exists a curve $\gamma_{x}:(-\infty, 0] \to M$ with $\gamma_{x}(0)=x$ such that
	$$
	u(x)-u(\gamma(t))=\int_{t}^{0} l\left(\gamma_{x}(s), \dot{\gamma}_{x}(s)\right) d s-c(K)t, \quad \forall t \in(-\infty, 0].
	$$
\end{itemize}
Fathi introduced this notion and showed that backward weak KAM solutions and viscosity solutions of equation \eqref{lab3} are the same \cite[Theorem 7.6.2]{Fat-b}. Similarly, one can define forward weak KAM solutions of equation \eqref{lab3}.

\begin{prop}(\cite[Proposition 4.2.1]{Fat-b})\label{li}
The family of viscosity solutions of equation \eqref{lab3} is equi-Lipschitz with the Lipschitz constant  $\mathrm{Lip}(u)\leq B+c(K)$, where
\[
B =\sup \left\{l(x, v): (x, v) \in TM,\|v\|_{x}=1\right\},
\]
where $\|\cdot\|_x$ denotes the norm on $T_xM$ induced by a Riemannian metric.
\end{prop}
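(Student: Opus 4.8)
The plan is to exploit the domination inequality built into the definition of a backward weak KAM solution, namely item (1) in the definition above, together with a well-chosen competitor curve joining two points. Since viscosity solutions of \eqref{lab3} coincide with backward weak KAM solutions by \cite[Theorem 7.6.2]{Fat-b}, it suffices to prove the stated bound for the latter; and because the constant $B+c(K)$ does not depend on the particular solution, establishing the pointwise Lipschitz estimate for an arbitrary solution $u$ immediately yields equi-Lipschitzness of the whole family.

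First I would record that $B<+\infty$. Indeed, the unit sphere bundle $\{(x,v)\in TM:\|v\|_x=1\}$ is compact because $M$ is compact, and $l$ is continuous on $TM$; hence the supremum defining $B$ is attained and finite. This finiteness is exactly what makes the asserted constant meaningful.

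The core step is the choice of competitor. Fix any two points $x,y\in M$ with $x\neq y$, and let $d(x,y)$ denote their Riemannian distance. By the Hopf--Rinow theorem, $M$ being a compact (hence complete) Riemannian manifold, there is a minimizing geodesic from $y$ to $x$; parametrize it by arc length as $\alpha:[0,s]\to M$ with $s=d(x,y)$, $\alpha(0)=y$, $\alpha(s)=x$, so that $\|\dot\alpha(r)\|_{\alpha(r)}=1$ for all $r\in[0,s]$. Applying the domination inequality (1) with this particular $\alpha$ and this value of $s$ (any admissible choice bounds the infimum from above) gives
\[
u(x)-u(y)\leq A_l(\alpha)+c(K)\,s=\int_0^{s} l\big(\alpha(r),\dot\alpha(r)\big)\,dr+c(K)\,d(x,y).
\]
Because $\|\dot\alpha(r)\|_{\alpha(r)}=1$, the integrand is bounded by $B$, whence $A_l(\alpha)\leq B\,d(x,y)$ and therefore $u(x)-u(y)\leq (B+c(K))\,d(x,y)$. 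Exchanging the roles of $x$ and $y$ yields the reverse inequality, so $|u(x)-u(y)|\leq (B+c(K))\,d(x,y)$; the case $x=y$ being trivial, this proves $\mathrm{Lip}(u)\leq B+c(K)$.

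This argument is essentially routine, so there is no serious obstacle; the only points requiring care are the finiteness of $B$ (compactness of the unit sphere bundle together with continuity of $l$) and the scaling in the competitor curve. Parametrizing the minimizing geodesic at unit speed is precisely what forces both the action term and the critical-value term $c(K)s$ to grow linearly in $d(x,y)$, producing the clean constant $B+c(K)$. A minor remark is that one must take $x\neq y$ to have a legitimate $s>0$ in the infimum of (1), but this is harmless since the estimate is vacuous when $x=y$.
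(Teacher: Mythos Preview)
Your argument is correct and is essentially the standard proof of this result. Note, however, that the paper does not give its own proof of this proposition: it is simply quoted from \cite[Proposition 4.2.1]{Fat-b} as background material. Your argument matches the approach in Fathi's book---use the domination inequality (item (1) in the definition of backward weak KAM solution) with a unit-speed minimizing geodesic as competitor, so that the action picks up at most $B\,d(x,y)$ while the $c(K)s$ term contributes $c(K)\,d(x,y)$.
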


\subsection{Weak KAM results for contact Hamiltonian systems}
\noindent $\bullet$ {\bf Admissibility.} \  
Let $H(x,u,p)$ be a contact Hamiltonian satisfying (H1), (H2) and $|\frac{\partial H}{\partial u}|\leq \lambda$ for some $\lambda>0$. Then there exists at least a real number $c$ such that $H(x,u,Du)=c$ admits viscosity solutions \cite{WWY1}.  Furthermore, there may be two different real numbers $c_1$ and $c_2$ such that $H(x,u,Du)=e$ admits viscosity solutions with $e=c_1$, $c_2$, respectively \cite{WWY1}. So, from this point of view, the notion of Ma\~n\'e's critical value is inapplicable to contact Hamiltonian systems or contact Hamilton-Jacobi equations. A new concept is needed to guarantee the existence of viscosity solutions of $H(x,u,Du)=e$. We say that $H(x,u,p)$ is admissible \cite{WWY2}, if there exists $a\in \R$ such that $c(H(x,a,p))=0$, where $c(H(x,a,p))$ denotes the Ma\~n\'e critical value of the classical Hamiltonian $H(x,a,p)$.

For $H(x,u,p)$ satisfying (H1), (H2) and $0\leq\frac{\partial H}{\partial u}\leq \lambda$, 
it was proven in \cite[Appendix B]{WWY2} that 
\begin{align}\label{4-1}
	H(x,u,Du)=0
\end{align} 
has viscosity solutions if and only if $H(x,u,p)$ is admissible. When $H(x,u,p)$ satisfies stronger assumptions (H1), (H2) and (H3), one can deduce that $H(x,u,p)$ is admissible \cite[Remark 1.2]{WWY2}, and thus equation \eqref{4-1} has viscosity solutions. In fact, under (H1)-(H3) equation \eqref{4-1} has a unique viscosity solution \cite[Appendix A]{WWY2}, denoted by $u_-$. Moreover, $u_-$ is Lipschitz on $M$. 

From now on to the end of Section 2, we always assume (H1)-(H3).

\noindent $\bullet$ {\bf Backward weak KAM solutions and calibrated curves.}\ 
Let $\Phi^H_{t}$ denote the local flow of \begin{align}\label{c}
	\left\{
	\begin{array}{l}
		\dot{x}=\frac{\partial H}{\partial p}(x,u,p),\\[3mm]
		\dot{p}=-\frac{\partial H}{\partial x}(x,u,p)-\frac{\partial H}{\partial u}(x,u,p)p,\\[3mm]
		\dot{u}=\frac{\partial H}{\partial p}(x,u,p)\cdot p-H(x,u,p).
	\end{array}
	\right.
\end{align}
The Legendre transform $\mathcal{L}: T^{*} M \times \mathbb{R} \rightarrow T M \times \mathbb{R}$ defined by
$$
\mathcal{L}: (x, u, p) \mapsto\left(x, u, \frac{\partial H}{\partial p}(x, u, p)\right)
$$
is a diffeomorphism. Using $\mathcal{L}$, we can define the contact Lagrangian $L(x, u,v)$ associated to $H(x, u, p)$ as
$$
L(x, u, v):=\sup _{p \in T_{x}^{*} M}\left\{\langle v, p\rangle_{x}-H(x, u, p)\right\}.
$$
Then $L(x, u, v)$ and $H(x, u, p)$ are Legendre transforms of each other, depending on conjugate variables $v$ and $p$ respectively. Let $\Phi^L_t=\mathcal{L}\circ \Phi^H_t\circ \mathcal{L}^{-1}$. We call $\Phi^L_t$ the Euler-Lagrange flow.

Following Fathi, one can define backward weak KAM solutions of equation \eqref{4-1} as follows. 
A function $u \in C(M)$ is called a backward weak KAM solution if:  
(i) for each continuous piecewise $C^{1}$ curve $\gamma:\left[t_{1}, t_{2}\right] \rightarrow M$, 
$$
u\left(\gamma\left(t_{2}\right)\right)-u\left(\gamma\left(t_{1}\right)\right) \leq \int_{t_{1}}^{t_{2}} L(\gamma(s), u(\gamma(s)), \dot{\gamma}(s)) ds;
$$
(ii) for each $x \in M$, there exists a $C^{1}$ curve $\gamma_x:(-\infty, 0] \rightarrow M$ with $\gamma_x(0)=x$ such that
$$
u(x)-u(\gamma_x(t))=\int_{t}^{0} L(\gamma_x(s), u(\gamma_x(s)), \dot{\gamma}_x(s)) ds, \quad \forall t<0.
$$
Backward weak KAM solutions and viscosity solutions are still the same \cite[Proposition 2.7]{WWY2}. Thus, equation \eqref{4-1} has a unique backward weak KAM solution $u_-$. The curves in (ii) are called $(u, L, 0)$-calibrated curves. We can also define forward weak KAM solutions of equation \eqref{4-1}. Note that backward and forward weak KAM solutions of equation \eqref{lab3} always exist in pairs \cite[Theorem 5.1.2]{Fat-b}. But, this is not the case for equation \eqref{4-1}, see for instance, \cite[Example 1.1]{WWY2}.

\begin{prop}(\cite[Proposition 4.1]{WWY2})\label{solu}
	Given $x \in M ,$ if $\gamma:(-\infty, 0] \rightarrow M$ is a $(u_-, L, 0)$-calibrated curve with $\gamma(0)=x$, then $(\gamma(t), u_-(\gamma(t)), p(t))$ satisfies equations \eqref{c} on $(-\infty, 0)$, where $p(t)=\frac{\partial L}{\partial v}(\gamma(t), u_-(\gamma(t)), \dot{\gamma}(t)) .$ 
\end{prop}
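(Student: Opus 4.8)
The plan is to check the three scalar equations of \eqref{c} separately along the curve $t\mapsto(\gamma(t),u_-(\gamma(t)),p(t))$ with $p(t)=\frac{\partial L}{\partial v}(\gamma(t),u_-(\gamma(t)),\dot\gamma(t))$. Two of them come for free. By the Legendre duality between $L$ and $H$ at the fixed value $u=u_-(\gamma(t))$, the defining relation $p=\frac{\partial L}{\partial v}$ is equivalent to $\dot\gamma=\frac{\partial H}{\partial p}(\gamma,u_-(\gamma),p)$, which is the first equation. Next, since $\gamma$ is $(u_-,L,0)$-calibrated, differentiating the identity $u_-(\gamma(0))-u_-(\gamma(t))=\int_t^0 L(\gamma(s),u_-(\gamma(s)),\dot\gamma(s))\,ds$ in $t$ shows that $s\mapsto u_-(\gamma(s))$ solves $\frac{d}{ds}u_-(\gamma(s))=L(\gamma(s),u_-(\gamma(s)),\dot\gamma(s))$; rewriting $L=\langle v,p\rangle-H$ with $v=\dot\gamma$ and using the first equation gives $\frac{d}{dt}u_-(\gamma(t))=\frac{\partial H}{\partial p}\cdot p-H$, which is the third equation. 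The entire content therefore sits in the second (momentum) equation, i.e. in proving that $\gamma$ satisfies the contact Euler--Lagrange (Herglotz) equation $\frac{d}{ds}\frac{\partial L}{\partial v}=\frac{\partial L}{\partial x}+\frac{\partial L}{\partial u}\frac{\partial L}{\partial v}$.

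To reach the Herglotz equation I would turn calibration into a minimization over an implicit action. Fix $t_1<t_2\le 0$ and take a smooth variation $\gamma_\epsilon$ with $\gamma_0=\gamma$ and fixed endpoints $\gamma_\epsilon(t_1)=\gamma(t_1)$, $\gamma_\epsilon(t_2)=\gamma(t_2)$. Let $u_\epsilon(s)$ be the solution of the implicit ODE $\dot u_\epsilon=L(\gamma_\epsilon,u_\epsilon,\dot\gamma_\epsilon)$ with the fixed initial value $u_\epsilon(t_1)=u_-(\gamma(t_1))$. The domination property of the backward weak KAM solution together with the strict monotonicity (H3), which makes $L$ strictly decreasing in $u$, yields by a Gronwall/comparison argument the inequality $u_-(\gamma_\epsilon(t_2))\le u_\epsilon(t_2)$; at $\epsilon=0$ calibration forces equality (indeed $u_0(s)=u_-(\gamma(s))$ by uniqueness for the ODE). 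Since $u_-(\gamma_\epsilon(t_2))=u_-(\gamma(t_2))$ is independent of $\epsilon$, the value $\epsilon=0$ minimizes $u_\epsilon(t_2)$, so $\frac{d}{d\epsilon}\big|_{0}u_\epsilon(t_2)=0$.

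The first variation is then computed through the linearized ODE. Writing $\xi$ for the variation field (with $\xi(t_1)=\xi(t_2)=0$) and $w(s)=\frac{\partial}{\partial\epsilon}\big|_{0}u_\epsilon(s)$, one differentiates the implicit ODE to get $\dot w=\frac{\partial L}{\partial x}\cdot\xi+\frac{\partial L}{\partial u}\,w+\frac{\partial L}{\partial v}\cdot\dot\xi$ with $w(t_1)=0$, all derivatives of $L$ evaluated along $(\gamma,u_-(\gamma),\dot\gamma)$. Introducing the integrating factor $\rho(s)=\exp(-\int_{t_1}^s\frac{\partial L}{\partial u}\,d\tau)>0$ and imposing the stationarity condition $w(t_2)=0$ gives $\int_{t_1}^{t_2}\rho\,(\frac{\partial L}{\partial x}\cdot\xi+\frac{\partial L}{\partial v}\cdot\dot\xi)\,ds=0$. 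Integrating the $\dot\xi$ term by parts (the boundary terms vanish) and using that $\xi$ is otherwise arbitrary yields $\frac{d}{ds}(\rho\,\frac{\partial L}{\partial v})=\rho\,\frac{\partial L}{\partial x}$; dividing by $\rho$ and using $\rho'=-\frac{\partial L}{\partial u}\rho$ produces exactly the Herglotz equation. Finally, differentiating $L(x,u,v)+H(x,u,p)=\langle v,p\rangle_x$ along $p=\frac{\partial L}{\partial v}$, $v=\frac{\partial H}{\partial p}$ in $x$ and in $u$ gives the identities $\frac{\partial L}{\partial x}=-\frac{\partial H}{\partial x}$ and $\frac{\partial L}{\partial u}=-\frac{\partial H}{\partial u}$, and substituting $p=\frac{\partial L}{\partial v}$ turns the Herglotz equation into $\dot p=-\frac{\partial H}{\partial x}-\frac{\partial H}{\partial u}\,p$, the second equation of \eqref{c}. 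Since $H$ is $C^3$ its contact flow is $C^2$, so once the triple solves \eqref{c} a.e. it is automatically a genuine orbit on $(-\infty,0)$.

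The main obstacle is the rigorous execution of this first variation, for two intertwined reasons. First, the value $u_-(\gamma_\epsilon(s))$ depends on the whole history of the perturbed curve, so the correct stationary condition is not the classical Euler--Lagrange expression but the weighted one above; routing the comparison through the implicit ODE (whose initial datum $u_-(\gamma(t_1))$ and terminal reference $u_-(\gamma(t_2))$ are \emph{fixed} numbers) is what avoids differentiating the merely Lipschitz function $u_-$, and it is precisely here that (H3) is indispensable, both to guarantee $\rho>0$ and to make the comparison $u_-(\gamma_\epsilon(t_2))\le u_\epsilon(t_2)$ valid. Second, the variation and the integration by parts presuppose enough regularity of $\gamma$: one must first establish, from superlinearity (H2) and strict convexity (H1), that calibrated curves are $C^1$ with absolutely continuous momentum before asserting the pointwise equation, after which smoothness bootstraps from the $C^2$ contact flow.
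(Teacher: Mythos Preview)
The paper does not give its own proof of this proposition: it is quoted as background from \cite[Proposition~4.1]{WWY2}, so there is nothing in the present paper to compare your argument against.

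That said, your outline is correct and is essentially the route taken in \cite{WWY1,WWY2}. The first and third equations of \eqref{c} are indeed immediate from Legendre duality and from differentiating the calibration identity. For the momentum equation, the references proceed exactly through the implicit (Herglotz) variational principle you describe: a calibrated curve minimizes, on every compact subinterval, the terminal value of the Carath\'eodory/implicit action $u_\epsilon$ among curves with the same endpoints and the same initial value $u_-(\gamma(t_1))$; the first variation of that terminal value yields the weighted Euler--Lagrange equation $\frac{d}{ds}\big(\rho\,\frac{\partial L}{\partial v}\big)=\rho\,\frac{\partial L}{\partial x}$ with $\rho=\exp\big(-\int_{t_1}^{s}\frac{\partial L}{\partial u}\big)$, which is the Herglotz equation and, under the Legendre transform, the second line of \eqref{c}. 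Your comparison step $u_-(\gamma_\epsilon(t_2))\le u_\epsilon(t_2)$ is exactly the statement that domination in the sense of condition~(i) implies domination by the implicit action; it follows from a clean ODE comparison because $\partial L/\partial u\le 0$ (so if $\phi:=u_-(\gamma_\epsilon(\cdot))-u_\epsilon(\cdot)$ were positive at some $s_0$, applying the integral inequality on the last interval $(s_1,s_0]$ on which $\phi>0$ gives $\phi(s_0)\le \phi(s_1)+\int_{s_1}^{s_0}(\partial_uL)\,\phi\le 0$, a contradiction). Strict monotonicity is not needed for that particular inequality, only $\partial H/\partial u\ge 0$; (H3) enters elsewhere, notably for the uniqueness of $u_-$ that makes the statement well posed.

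The one point that genuinely requires care, and which you flag yourself, is the a priori regularity of $\gamma$ needed to justify the variation and the integration by parts. In \cite{WWY1} this is handled by first proving a Tonelli-type regularity theorem for minimizers of the implicit action (using (H1)--(H2)), so that any calibrated curve is automatically $C^1$ with absolutely continuous momentum; after that the bootstrap to a genuine $C^2$ orbit of \eqref{c} goes through. With that ingredient supplied, your argument is complete.
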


Let us recall two semigroups of operators introduced in \cite{WWY1}. Define a family of nonlinear operators $\left\{T^-_{t}\right\}_{t \geq 0}$ from $C(M)$ to itself as follows. For each $\varphi \in C(M)$, denote by $(x, t) \mapsto T^-_{t} \varphi(x)$ the unique continuous function on $(x,t) \in M \times[0,+\infty)$ such that
$$
T^-_{t} \varphi(x)=\inf _{\gamma}\left\{\varphi(\gamma(0))+\int_{0}^{t} L\left(\gamma(\tau), T^-_{\tau} \varphi(\gamma(\tau)), \dot{\gamma}(\tau)\right) d \tau\right\},
$$
where the infimum is taken among absolutely continuous curves $\gamma:[0, t] \rightarrow M$ with $\gamma(t)=x .$ $\{T^-_t\}_{t\geq 0}$ is called the backward solution semigroup. The infimum can be achieved. 
Similarly, one can define another semigroup of operators $\left\{T_{t}^{+}\right\}_{t\geq 0}$, called the forward solution semigroup, by
$$
T_{t}^{+} \varphi(x)=\sup _{\gamma}\left\{\varphi(\gamma(t))-\int_{0}^{t} L\left(\gamma(\tau), T_{t-\tau}^{+} \varphi(\gamma(\tau)), \dot{\gamma}(\tau)\right) d \tau\right\},
$$
where the infimum is taken among absolutely continuous curves $\gamma:[0, t] \rightarrow M$ with $\gamma(0)=x$. These two semigroups can be regarded as the contact counterparts of the Lax-Oleinik semigroups for classical Lagrangians defined on $TM$.

\begin{prop}(\cite[Proposition 2.9, 2.10]{WWY2})\label{conv}
	For each $\varphi \in C(M)$, the uniform limit $\lim _{t \rightarrow+\infty} T^-_{t} \varphi$ exists and $\lim _{t \rightarrow+\infty} T^-_{t} \varphi=u_-$. The uniform limit $\lim _{t \rightarrow+\infty} T_{t}^{+} u_{-}$ exists. Let $u_{+}=\lim _{t \rightarrow+\infty} T_{t}^{+} u_{-} .$ Then $u_{+}$ is a forward weak KAM solution of equation \eqref{4-1}.
\end{prop}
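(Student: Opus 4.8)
\section*{Proof proposal}

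The plan is to establish convergence of the backward semigroup first, then bootstrap to the forward statement. I would begin by recording the basic monotonicity and Lipschitz properties of $\{T^-_t\}_{t\geq 0}$ that follow from the strict monotonicity assumption (H3). Because $\partial H/\partial u>\delta>0$, the contact Lagrangian $L(x,u,v)$ is strictly increasing in the $u$-argument, which makes the backward semigroup a \emph{contraction}: for any $\varphi,\psi\in C(M)$ one expects an estimate of the form $\|T^-_t\varphi-T^-_t\psi\|_\infty\leq e^{-\delta t}\|\varphi-\psi\|_\infty$. The key step here is a Gronwall-type argument applied to the defining integral equation of $T^-_t$, using that the difference of the two running values $T^-_\tau\varphi-T^-_\tau\psi$ enters $L$ through a term whose derivative is bounded below by $\delta$. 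Once contraction is in hand, the equi-Lipschitz bound for the orbits $\{T^-_t\varphi\}$ (uniform in $t\geq 1$, say) comes from the superlinearity (H2) and positive definiteness (H1): the minimizing curves have uniformly bounded velocities, so the functions $T^-_t\varphi$ are uniformly Lipschitz for large $t$, hence precompact in $C(M)$ by Arzel\`a--Ascoli.

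With contraction and precompactness established, convergence of $T^-_t\varphi$ follows by a standard dynamical argument. Any uniform limit point $w$ along a subsequence $t_k\to\infty$ is a fixed point of $T^-_s$ for every $s>0$, because $T^-_s(T^-_{t_k}\varphi)=T^-_{t_k+s}\varphi$ and both sides converge; by the contraction estimate a fixed point of $\{T^-_s\}$ is unique, and it must coincide with $u_-$, since $u_-$ is itself characterized as the unique viscosity (equivalently backward weak KAM) solution of \eqref{4-1}, which is precisely a common fixed point of the backward semigroup. Uniqueness of the limit point then upgrades subsequential convergence to full convergence $T^-_t\varphi\to u_-$, and the convergence is uniform because all the functions involved live in a fixed equi-Lipschitz, equi-bounded family. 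I would carry out the identification ``fixed point of $T^-_s$ equals backward weak KAM solution'' by unwinding the definitions: the fixed-point relation $T^-_s w=w$ gives both the domination inequality (i) and, via the achieved infimum, the existence of calibrated curves (ii).

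For the forward statement I would apply the same scheme to $\{T^+_t\}$ but now only to the single initial datum $u_-$, since in the contact setting forward and backward solutions need not come in pairs and a general forward convergence result is not claimed. The forward semigroup is likewise a contraction under (H3) (monotonicity in $u$ forces the analogous $e^{-\delta t}$ estimate), so the sequence $T^+_t u_-$ is Cauchy in $t$ and converges uniformly to some $u_+\in C(M)$. The final point is to check that this limit is a forward weak KAM solution of \eqref{4-1}: by continuity of each $T^+_s$ and the semigroup identity $T^+_s(T^+_t u_-)=T^+_{t+s}u_-$, passing to the limit in $t$ shows $T^+_s u_+=u_+$ for all $s\geq 0$, and I would then translate this fixed-point property into the two defining conditions of a forward weak KAM solution exactly as in the backward case. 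The main obstacle I anticipate is the contraction estimate itself: extracting the clean exponential rate $e^{-\delta t}$ requires carefully controlling how the running solution value feeds back into the Lagrangian along minimizers, and making the Gronwall argument rigorous when the minimizing curves are only absolutely continuous rather than smooth. Everything downstream of that estimate is a routine fixed-point-plus-compactness argument.
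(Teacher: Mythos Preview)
The paper does not actually prove this proposition; it is quoted verbatim from \cite{WWY2} (their Propositions~2.9 and~2.10) and stated without argument here. So there is no ``paper's own proof'' to compare against. That said, your proposal contains one minor slip and one genuine gap.

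The minor slip: since $L(x,u,v)=\sup_p\{p\cdot v-H(x,u,p)\}$, the hypothesis $\partial H/\partial u>\delta$ gives $\partial L/\partial u<-\delta$, so $L$ is strictly \emph{decreasing} in $u$, not increasing. This does not break your backward argument---the contraction estimate $\|T^-_t\varphi-T^-_t\psi\|_\infty\le e^{-\delta t}\|\varphi-\psi\|_\infty$ is correct and does yield $T^-_t\varphi\to u_-$ exactly as you outline---but the sign in your reasoning is reversed.

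The genuine gap is in the forward part. Your assertion that ``the forward semigroup is likewise a contraction under (H3)'' is false: under (H3) the operator $T^+_t$ is \emph{expansive}. Already in the model case $H(x,u,p)=\lambda u$ (so $L(x,u,0)=-\lambda u$, with the velocity frozen) one finds $T^+_t\varphi=e^{\lambda t}\varphi$, which diverges for any $\varphi\neq 0$. This is precisely why the proposition claims convergence of $T^+_t$ only for the special initial datum $u_-$, and why the argument in \cite{WWY2} is not a contraction argument. The correct mechanism is monotonicity in $t$: from $T^+_t\circ T^-_t\le \mathrm{id}$ and $T^-_tu_-=u_-$ one gets $T^+_tu_-\le u_-$; combining this with order-preservation of $T^+_s$ shows $t\mapsto T^+_tu_-$ is nonincreasing, and a uniform lower bound together with equi-Lipschitz estimates then gives uniform convergence to a fixed point $u_+$. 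Your Cauchy-via-contraction route for $T^+_tu_-$ does not go through; you must replace it with this monotone-barrier scheme.
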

See \cite{Fat-b} for the convergence result for the Lax-Oleinik semigroups associated with autonomous Lagrangian systems $l(x,v)$, and \cite{WY} for the convergence result for a kind of modified Lax-Oleinik semigroups associated with time-periodic  Lagrangian systems $g(t,x,v)$.

\begin{prop}(\cite[Lemma 4.7]{WWY2})\label{pr803}
	 Let $u_+$ be as in Proposition \ref{conv}. 
	 For any given $x \in M$ with $u_{-}(x)=u_{+}(x)$, there exists a curve $\gamma:(-\infty,+\infty) \rightarrow M$ with $\gamma(0)=x$ such that $u_{-}(\gamma(t))=u_{+}(\gamma(t))$ for each $t \in \mathbb{R}$, and
	$$
	u_{\pm}\left(\gamma\left(t^{\prime}\right)\right)-u_{\pm}(\gamma(t))=\int_{t}^{t^{\prime}} L\left(\gamma(s), u_{\pm}(\gamma(s)), \dot{\gamma}(s)\right) d s, \quad \forall t \leq t^{\prime} \in \mathbb{R}
	$$
	Moreover, $u_{\pm}$ are differentiable at $x$ with the same derivative $D u_{\pm}(x)=$ $\frac{\partial L}{\partial v}\left(x, u_{\pm}(x), \dot{\gamma}(0)\right)$.
\end{prop}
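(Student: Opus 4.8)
The plan is to produce the two‑sided curve by concatenating a backward $u_-$‑calibrated curve and a forward $u_+$‑calibrated curve issuing from $x$, and then to show the concatenation is a genuine orbit of the contact system \eqref{c}. Since $u_-$ is a backward weak KAM solution, condition (ii) of its definition furnishes a $(u_-,L,0)$‑calibrated curve $\gamma_1:(-\infty,0]\to M$ with $\gamma_1(0)=x$; since $u_+$ is a forward weak KAM solution, its forward analog furnishes a forward $(u_+,L,0)$‑calibrated curve $\gamma_2:[0,+\infty)\to M$ with $\gamma_2(0)=x$. By Proposition \ref{solu} (and its forward counterpart) the lifts $(\gamma_1(t),u_-(\gamma_1(t)),p_1(t))$ and $(\gamma_2(t),u_+(\gamma_2(t)),p_2(t))$, with $p_1=\frac{\partial L}{\partial v}(\gamma_1,u_-(\gamma_1),\dot\gamma_1)$ and $p_2=\frac{\partial L}{\partial v}(\gamma_2,u_+(\gamma_2),\dot\gamma_2)$, solve \eqref{c} on their respective time intervals. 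Everything then reduces to two points: that $u_-=u_+$ along each piece, and that the pieces match to first order at $t=0$ so that, by uniqueness for the ODE \eqref{c}, they fit together into a single orbit on all of $\mathbb{R}$. Throughout I use the pairing relation $u_+\le u_-$ on $M$ coming from the construction $u_+=\lim_{t\to\infty}T^+_t u_-$ of Proposition \ref{conv}, together with (H3), which makes $L$ strictly decreasing in $u$ (indeed $\frac{\partial L}{\partial u}=-\frac{\partial H}{\partial u}\in[-\lambda,-\delta]$ at the conjugate momentum).

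For the equality of $u_-$ and $u_+$ along $\gamma_1$ I would compare the two functions by a Gronwall argument. Writing $w(s)=u_+(\gamma_1(s))-u_-(\gamma_1(s))$, the calibration of $\gamma_1$ gives $\frac{d}{ds}u_-(\gamma_1(s))=L(\gamma_1,u_-(\gamma_1),\dot\gamma_1)$, while the domination inequality satisfied by $u_+$ gives $\frac{d}{ds}u_+(\gamma_1(s))\le L(\gamma_1,u_+(\gamma_1),\dot\gamma_1)$ a.e. Subtracting and applying the mean value theorem in the $u$‑slot of $L$ yields $w'(s)\le k(s)w(s)$ a.e. with $k(s)=\frac{\partial L}{\partial u}<0$; since $w(0)=u_+(x)-u_-(x)=0$, multiplying by the integrating factor $e^{-\int_0^s k}$ shows $s\mapsto w(s)e^{-\int_0^s k}$ is nonincreasing, whence $w(s)\ge 0$ for $s\le 0$, i.e. $u_+\ge u_-$ on $\gamma_1$. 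Combined with $u_+\le u_-$ this gives $u_+=u_-$ on $\gamma_1$; the mirror computation on $\gamma_2$ gives $u_+=u_-$ there as well. Once $u_-\equiv u_+$ along both pieces, the calibration identity written with $u_-$ coincides with the one written with $u_+$, and a short additivity argument over $[t,0]\cup[0,t']$ upgrades them to the full two‑sided identity for all $t\le t'$.

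The heart of the matter, and the step I expect to be the main obstacle, is the junction regularity at $t=0$, which simultaneously yields the differentiability claim. The backward calibration forces $p_-:=\frac{\partial L}{\partial v}(x,u_-(x),\dot\gamma_1(0))\in D^+u_-(x)$ and the forward calibration forces $p_+:=\frac{\partial L}{\partial v}(x,u_+(x),\dot\gamma_2(0))\in D^-u_+(x)$. Because $u_+\le u_-$ with equality at $x$, any $q\in D^+u_-(x)$ and any $r\in D^-u_+(x)$ satisfy, for $y$ near $x$, $r\cdot(y-x)+o(|y-x|)\le u_+(y)-u_+(x)\le u_-(y)-u_-(x)\le q\cdot(y-x)+o(|y-x|)$, and letting $y\to x$ along arbitrary directions gives $r=q$. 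Applying this to $q=p_-$, $r=p_+$ yields $p_-=p_+=:p$; since $\frac{\partial L}{\partial v}(x,u(x),\cdot)$ is a diffeomorphism (Legendre) and $u_-(x)=u_+(x)$, inverting this identity also gives $\dot\gamma_1(0)=\dot\gamma_2(0)$.

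Finally I would cash in $p$ for differentiability and smoothness. The inequality $u_+(y)-u_+(x)\le u_-(y)-u_-(x)\le p\cdot(y-x)+o(|y-x|)$ shows $p\in D^+u_+(x)$, which with $p\in D^-u_+(x)$ makes $u_+$ differentiable at $x$ with $Du_+(x)=p$; symmetrically $p\in D^-u_-(x)$ with $p\in D^+u_-(x)$ makes $u_-$ differentiable at $x$ with $Du_-(x)=p=\frac{\partial L}{\partial v}(x,u(x),\dot\gamma(0))$. Since the two lifted orbits now share the initial datum $(x,u(x),p)$ and both solve \eqref{c}, whose vector field is $C^2$ because $H\in C^3$, uniqueness for \eqref{c} shows they are restrictions of one orbit; its projection $\gamma=\gamma_1\cup\gamma_2$ is therefore a $C^2$ curve on $\mathbb{R}$ with $\gamma(0)=x$ along which $u_-=u_+$ and the calibration identity holds, completing the proof. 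The delicate points to get right are the sign of the monotonicity in the Gronwall step (fixed by (H3)) and the simultaneous use of the two one‑sided calibrated curves to squeeze a single common derivative out of the sub‑ and super‑differentials; the global relation $u_+\le u_-$ is the one external ingredient the argument leans on.
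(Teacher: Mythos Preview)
The paper does not prove this proposition; it is quoted without argument from \cite[Lemma~4.7]{WWY2}, so there is no proof in the present paper to compare yours against.

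For what it is worth, your reconstruction is sound and is the natural adaptation of the classical weak KAM argument to the contact setting: concatenate a backward $(u_-,L,0)$-calibrated ray with a forward $(u_+,L,0)$-calibrated ray at $x$; use the global inequality $u_+\le u_-$ (which follows from $T^+_t u_-\le u_-$ and Proposition~\ref{conv}) together with the strict monotonicity (H3), via the Gronwall step you wrote, to force $u_-=u_+$ along each half-ray; and then use the sub-/super-differential squeeze at the contact point to obtain $p_-=p_+$, the differentiability of $u_\pm$ at $x$, and the matching of the two lifted half-orbits under uniqueness for \eqref{c}. One small point you might make explicit is why $p_-\in D^+u_-(x)$: by Proposition~\ref{solu} the solution $u_-$ is differentiable along $\gamma_1|_{(-\infty,0)}$ with derivative $p_1(t)\to p_-$, so $p_-$ is a reachable gradient, and semiconcavity of the viscosity solution $u_-$ then places it in $D^+u_-(x)$; the symmetric remark applies to $p_+\in D^-u_+(x)$.
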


\begin{prop}(\cite[Theorem 1.5]{WWY2})\label{long}
	Given $x\in M$, let $\gamma:(-\infty, 0] \rightarrow M$ be a $\left(u_{-}, L, 0\right)$-calibrated curve with $\gamma(0)=x.$ Let $u:=$ $u_{-}\left(x\right), p:=\frac{\partial L}{\partial \dot{x}}(x, u, \dot{\gamma}(0)_{-})$ where $\dot{\gamma}(0)_{-}$ denotes the left derivative of $\gamma(t)$ at
	$t=0 .$ Let $\alpha\left(x, u, p\right)$ be the $\alpha$-limit set of $\left(x, u, p\right) .$ Then
	$$
	\alpha\left(x, u, p\right) \subseteq \tilde{\mathcal{A}}_H,
	$$
	where $\tilde{\mathcal{A}}_H$ denotes the Aubry set for system \eqref{c}.
\end{prop}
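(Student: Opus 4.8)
The plan is to realize the $\alpha$-limit set as the trace of a globally calibrated curve and then invoke the characterization of the Aubry set. First I would record the a priori compactness that underlies everything: by Proposition \ref{solu} the lift $t\mapsto\big(\gamma(t),u_-(\gamma(t)),p(t)\big)$ is a genuine orbit of \eqref{c}, and since $u_-$ is Lipschitz (hence bounded) while $L$ is superlinear in $v$, any $(u_-,L,0)$-calibrated curve has uniformly bounded speed. Consequently the backward orbit $\{\Phi^H_t(x,u,p):t\le 0\}$ stays in a fixed compact subset of $T^*M\times\R$, so $\alpha(x,u,p)$ is nonempty, compact, connected and invariant under $\Phi^H_t$.

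Next I would build a two-sided calibrated curve through an arbitrary limit point. Fix $(x_\ast,u_\ast,p_\ast)\in\alpha(x,u,p)$ and a sequence $t_n\to-\infty$ with $\Phi^H_{t_n}(x,u,p)\to(x_\ast,u_\ast,p_\ast)$. Put $\gamma_n(s):=\gamma(s+t_n)$, which is $(u_-,L,0)$-calibrated on the interval $(-\infty,-t_n]$ whose length tends to $+\infty$. Using the uniform speed bound together with the smooth dependence of solutions of \eqref{c} on initial data, I would extract a subsequence converging in $C^1_{\mathrm{loc}}$ to a curve $\gamma_\ast:\R\to M$ with $\gamma_\ast(0)=x_\ast$. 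Continuity of $u_-$ gives $u_\ast=u_-(x_\ast)$, and convergence of the velocities gives $p_\ast=\frac{\partial L}{\partial v}\big(x_\ast,u_\ast,\dot\gamma_\ast(0)\big)=Du_-(x_\ast)$, so that $(x_\ast,u_\ast,p_\ast)$ is exactly the initial lift of $\gamma_\ast$.

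I would then pass to the limit in the calibration identity. From uniform convergence of $u_-(\gamma_n)\to u_-(\gamma_\ast)$ and $\dot\gamma_n\to\dot\gamma_\ast$ on compact intervals and continuity of $L$, the equalities defining calibration survive in the limit, yielding
$$u_-(\gamma_\ast(t'))-u_-(\gamma_\ast(t))=\int_{t}^{t'}L\big(\gamma_\ast(s),u_-(\gamma_\ast(s)),\dot\gamma_\ast(s)\big)\,ds,\qquad \forall\, t\le t'\in\R,$$
so $\gamma_\ast$ is calibrated for $u_-$ in both time directions on all of $\R$.

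Finally I would convert two-sided calibration into membership in the Aubry set. The point is to show $u_-(\gamma_\ast(t))=u_+(\gamma_\ast(t))$ for every $t$, where $u_+=\lim_{s\to+\infty}T^+_s u_-$ from Proposition \ref{conv}: the forward half of the calibration of $u_-$ forces $u_-$ to coincide with the forward weak KAM solution along $\gamma_\ast$, the strict monotonicity (H3) guaranteeing that the two conjugate solutions can only meet along such globally minimizing orbits. Granting this, $\gamma_\ast(\R)\subseteq\{u_-=u_+\}$, and Proposition \ref{pr803} identifies the corresponding orbit with the Aubry orbit through $x_\ast$; in particular $(x_\ast,u_\ast,p_\ast)\in\tilde{\mathcal A}_H$. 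Since $(x_\ast,u_\ast,p_\ast)$ was arbitrary, $\alpha(x,u,p)\subseteq\tilde{\mathcal A}_H$. I expect the last step to be the main obstacle: because the contact semigroup $T^+_s$ evaluates $L$ along the implicit value $T^+_{s-\tau}u_-$ rather than $u_-$ itself, relating forward calibration of $u_-$ to the genuine forward solution $u_+$ is the delicate part, and is exactly where (H3) must be used.
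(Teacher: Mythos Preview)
The paper does not prove this proposition at all: it is quoted verbatim from \cite[Theorem 1.5]{WWY2} and used as a black box in Section~2.2, so there is no ``paper's own proof'' to compare your attempt against.

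As for the sketch itself, steps 1--3 (compactness of the backward orbit, extraction of a globally $(u_-,L,0)$-calibrated limit curve $\gamma_\ast$ through any $\alpha$-limit point, and passage to the limit in the calibration identity) are standard and would go through essentially as you describe. Where you stop short is exactly the right place. In the contact setting the identification of points on a two-sided $u_-$-calibrated curve with Aubry points is most cleanly done via the \emph{definition} of $\tilde{\mathcal A}_H$ given in the paper, namely $\tilde{\mathcal A}_H=\bigcap_{t\ge 0}\Phi^H_{-t}(G_{u_-})$: once you know $\gamma_\ast$ is calibrated for $u_-$ on all of $\R$, Proposition~\ref{solu} gives that $u_-$ is differentiable at $\gamma_\ast(t)$ with $Du_-(\gamma_\ast(t))=\frac{\partial L}{\partial v}(\gamma_\ast(t),u_-(\gamma_\ast(t)),\dot\gamma_\ast(t))$ for every $t$, so the full orbit through $(x_\ast,u_\ast,p_\ast)$ lies in $G_{u_-}$, and $\Phi^H_t$-invariance of the $\alpha$-limit set then yields $(x_\ast,u_\ast,p_\ast)\in\bigcap_{t\ge 0}\Phi^H_{-t}(G_{u_-})=\tilde{\mathcal A}_H$ directly. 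Your detour through $u_+$ and Proposition~\ref{pr803} is unnecessary and, as you suspected, harder to justify because of the implicit dependence of $T^+_s$ on its own value; the intersection definition of $\tilde{\mathcal A}_H$ avoids that issue entirely.
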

We recall the definitions of Mather sets and Aubry sets for \eqref{c} now.

\noindent $\bullet$ {\bf Aubry and Mather sets.}\ 
We define a subset of $T^{*} M \times \mathbb{R}$ associated with $u_{-}$ by
$G_{u_{-}}:=\operatorname{cl}\left(\left\{(x, u, p): x\right.\right.$ is a point of differentiability of $\left.\left.u_{-}, u=u_{-}(x), p=D u_{-}(x)\right\}\right)$, 
where $\operatorname{cl}(S)$ denotes the closure of $S \subseteq T^{*} M \times \mathbb{R} .$

Define the Aubry set for \eqref{c} by
$$
\tilde{\mathcal{A}}_H:=\bigcap_{t \geq 0} \Phi^H_{-t}\left(G_{u_{-}}\right).
$$
$\tilde{\mathcal{A}}_H$ is non-empty, compact and $\Phi^H_t$-invariant \cite{WWY2}. Then there exist Borel $\Phi^H_{t}$ -invariant probability measures supported in $\tilde{\mathcal{A}}_H .$ We call these measures Mather measures and denote by $\mathfrak{M}$ the set of Mather measures. The Mather set is defined by
$$
\tilde{\mathcal{M}}_H=\mathrm{cl}\left(\bigcup_{\mu \in \mathfrak{M}} \operatorname{supp}(\mu)\right).
$$

We call $\mathcal{A}_H:=\pi_x \tilde{\mathcal{A}}_H$ and $\mathcal{M}_H:=\pi_x \tilde{\mathcal{M}}_H$, the projected Aubry set and the projected Mather set, respectively. The projection $\pi_x: T^{*} M \times \mathbb{R} \to M$ induces a bi-Lipschitz homeomorphism from $\tilde{\mathcal{A}}_H$ to $\mathcal{A}_H$ \cite[Theorem 1.3]{WWY2}. 
We also have that \cite[Theorem 1.3, Formula (1.15)]{WWY2}
\begin{align}\label{bh}
\tilde{\mathcal{A}}_H=G_{u_-}\cap G_{u_+}\subset\left\{(x, u, p) \in T^{*} M \times \mathbb{R}: H(x, u, p)=0, u=u_{-}(x)\right\}, 
\end{align}
where $u_+$ is as in Proposition \ref{conv}, and $G_{u_{+}}:=\operatorname{cl}\left(\left\{(x, u, p): x\right.\right.$ is a point of differentiability of $\left.\left.u_{+}, u=u_{+}(x), p=D u_{+}(x)\right\}\right).$
%\begin{prop}(\cite[Lemma 4.8]{WWY2})\label{pr800}
%A curve $(x(\cdot), u(\cdot)): \mathbb{R} \rightarrow M \times \mathbb{R}$ is static if and only if $u(t)=u_{-}(x(t))$ for all $t \in \mathbb{R}$.
%\end{prop}

We will also use the following notations
\[
\tilde{\mathcal{M}}_L:=\mathcal{L}(\tilde{\mathcal{M}}_H)\subset TM,\quad \tilde{\mathcal{A}}_L:=\mathcal{L}(\tilde{\mathcal{A}}_H)\subset TM.
\]

\section{Existence of slutions of mean field games system}

\subsection{Mather sets of reversible contact Hamiltonian systems}

Under the assumptions (H1)-(H4), we can take a closer look at the Mather set of \eqref{c}. 
\begin{prop}\label{key2}
Let 
\[
\mathcal{K}_H:=\{(x,u_-(x),0):H(x,u_-(x),0)=0\}.
\]
Then $\mathcal{K}_H$ is a non-empty compact subset of the Mather set $\tilde{\mathcal{M}}_{H}$.
\end{prop}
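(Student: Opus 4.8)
The plan is to first show $\mathcal{K}_H \neq \emptyset$, then show $\mathcal{K}_H \subset \tilde{\mathcal{M}}_H$, with compactness being essentially immediate. To establish non-emptiness I would exploit the reversibility assumption (H4). The key observation is that for a reversible Hamiltonian, $H(x,u,p) = H(x,u,-p)$, so $p = 0$ is a critical point of $p \mapsto H(x,u,p)$; combined with (H1) (convexity in $p$), the point $p=0$ is in fact the global minimum of $p \mapsto H(x,u,p)$ for each fixed $(x,u)$. Define $g(x) := H(x, u_-(x), 0)$. I would argue that $g(x) \leq 0$ for every $x$ where $u_-$ is differentiable: indeed, $u_-$ being a (backward weak KAM = viscosity) solution of $H(x,u_-,Du_-)=0$ gives $H(x, u_-(x), Du_-(x)) = 0$ at points of differentiability, and since $0$ minimizes $H(x,u_-(x), \cdot)$ we get $g(x) = H(x,u_-(x),0) \leq H(x,u_-(x),Du_-(x)) = 0$. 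On the other hand, at a point $x_0$ where $u_-$ attains its maximum (which exists since $M$ is compact and $u_-$ continuous) and is differentiable, one has $Du_-(x_0)=0$, hence $g(x_0) = H(x_0, u_-(x_0), Du_-(x_0)) = 0$. The subtlety is that $u_-$ need only be differentiable a.e., so the maximum point may not be a differentiability point; to get around this I would instead use the viscosity subsolution/supersolution inequalities at the maximum via a $C^1$ test function touching $u_-$ from above at $x_0$ (its gradient is $0$ there), yielding $H(x_0,u_-(x_0),0) \le 0$, and a test function from below combined with the fact that $g \le 0$ near a.e. point forces $g(x_0) = 0$ by continuity of $x \mapsto H(x,u_-(x),0)$. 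Either way $\mathcal{K}_H$ contains such an $x_0$ and is non-empty. Compactness follows because $\mathcal{K}_H$ is the preimage of $\{0\}$ under the continuous map $x \mapsto H(x,u_-(x),0)$ intersected with the graph of $(u_-, 0)$ over the compact $M$, hence closed in a compact set.

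The heart of the argument is the inclusion $\mathcal{K}_H \subset \tilde{\mathcal{M}}_H$. First I would check $\mathcal{K}_H \subset \tilde{\mathcal{A}}_H$. Take $(x_0, u_-(x_0), 0) \in \mathcal{K}_H$. Using the reversibility, the vector field in \eqref{c} at a point with $p=0$ becomes $\dot{x} = \frac{\partial H}{\partial p}(x,u,0) = 0$ (since $p=0$ is the minimum of the convex function $H(x,u,\cdot)$, its $p$-gradient vanishes there), $\dot{p} = -\frac{\partial H}{\partial x}(x,u,0) - 0$, and $\dot{u} = 0 - H(x,u,0)$. So on $\mathcal{K}_H$ we have $H(x,u,0)=0$, giving $\dot u = 0$; and I would want $\dot p = 0$ too, i.e. $\frac{\partial H}{\partial x}(x_0,u_-(x_0),0) = 0$. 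This is where I would use that $x_0$ is a maximum of $u_-$ together with the structure of $g(x) = H(x,u_-(x),0) \le 0 = g(x_0)$: $x_0$ is a maximum of $g$, so (if $u_-$ were $C^1$ there) $0 = Dg(x_0) = \frac{\partial H}{\partial x}(x_0, u_-(x_0), 0) + \frac{\partial H}{\partial u}(x_0,u_-(x_0),0) Du_-(x_0) = \frac{\partial H}{\partial x}(x_0,u_-(x_0),0)$ since $Du_-(x_0) = 0$. Hence $(x_0, u_-(x_0), 0)$ is a fixed point of $\Phi^H_t$, so trivially $\Phi^H_{-t}(x_0,u_-(x_0),0) = (x_0,u_-(x_0),0) \in G_{u_-}$ for all $t \ge 0$ (provided $(x_0,u_-(x_0),0) \in G_{u_-}$, which holds since it is a limit of $(x, u_-(x), Du_-(x))$ along differentiability points approaching $x_0$ — here I would need a short lemma that $Du_-(x) \to 0$ as $x \to x_0$ along such points, again from $x_0$ being an interior max and $C^{1,1}$-type regularity, or from semiconcavity of $u_-$). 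Therefore $\mathcal{K}_H \subset \bigcap_{t\ge0}\Phi^H_{-t}(G_{u_-}) = \tilde{\mathcal{A}}_H$.

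Finally, to upgrade from $\tilde{\mathcal{A}}_H$ to $\tilde{\mathcal{M}}_H$, I would produce an invariant probability measure supported on $\mathcal{K}_H$. Since each point of $\mathcal{K}_H$ is a fixed point of the flow $\Phi^H_t$ (by the previous paragraph the whole set consists of equilibria), the Dirac mass $\delta_{(x_0,u_-(x_0),0)}$ at any such point is a $\Phi^H_t$-invariant Borel probability measure supported in $\tilde{\mathcal{A}}_H$, hence a Mather measure; thus $(x_0, u_-(x_0),0) \in \operatorname{supp}(\delta_{(x_0,u_-(x_0),0)}) \subset \tilde{\mathcal{M}}_H$. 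Since this holds for every point of $\mathcal{K}_H$ and $\tilde{\mathcal{M}}_H$ is closed, $\mathcal{K}_H \subset \tilde{\mathcal{M}}_H$. The main obstacle I anticipate is the regularity bookkeeping: justifying that $\frac{\partial H}{\partial x}(x_0,u_-(x_0),0)=0$ and that $(x_0,u_-(x_0),0)$ genuinely lies in $G_{u_-}$ when $u_-$ is only Lipschitz and the maximum point need not be a point of differentiability. I expect this is handled cleanly by invoking semiconcavity of $u_-$ (standard for viscosity solutions of convex Hamilton–Jacobi equations), which forces $u_-$ to be differentiable at its maximum $x_0$ with $Du_-(x_0)=0$, and by approximating via nearby differentiability points to land in the closed set $G_{u_-}$; the viscosity-solution inequality then pins $H(x_0,u_-(x_0),0)=0$ and the maximality of $g$ at $x_0$ pins the $x$-derivative.
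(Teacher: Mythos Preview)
Your overall strategy is sound and genuinely different from the paper's, but there is a conflation in your inclusion argument that, as written, is a real gap. When you ``take $(x_0,u_-(x_0),0)\in\mathcal{K}_H$'' and then invoke ``that $x_0$ is a maximum of $u_-$'' to conclude $Du_-(x_0)=0$, you are mixing the specific point used for non-emptiness with a generic point of $\mathcal{K}_H$; an arbitrary element of $\mathcal{K}_H$ need not maximize $u_-$. The claim $Du_-(x_0)=0$ is nonetheless true, but for a different reason: since $p=0$ is the strict minimum of $p\mapsto H(x_0,u_-(x_0),p)$ with value $0$, any $p\in D^{+}u_-(x_0)$ satisfies $0\le H(x_0,u_-(x_0),p)\le 0$ by the subsolution inequality, forcing $p=0$; semiconcavity then yields differentiability of $u_-$ at $x_0$ with $Du_-(x_0)=0$. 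With this in hand, $x_0$ is indeed a maximum of $g$ (since $g\le 0$ everywhere by continuity and $g(x_0)=0$), and the chain rule gives $Dg(x_0)=\frac{\partial H}{\partial x}(x_0,u_-(x_0),0)=0$, so $(x_0,u_-(x_0),0)$ is a genuine equilibrium of~\eqref{c}. Once you patch the justification this way, your route to $\mathcal{K}_H\subset\tilde{\mathcal{A}}_H$ and then $\tilde{\mathcal{M}}_H$ via Dirac masses is fine.

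The paper proceeds quite differently and avoids all regularity bookkeeping. For $(x_0,u_-(x_0),0)\in\mathcal{K}_H$ it observes that reversibility gives $L(x_0,u_-(x_0),0)=-H(x_0,u_-(x_0),0)=0$, so the constant curve $\gamma^*(t)\equiv x_0$ is $(u_-,L,0)$-calibrated; Proposition~\ref{solu} then says $\big(\gamma^*(t),u_-(\gamma^*(t)),\frac{\partial L}{\partial v}(\gamma^*(t),u_-(\gamma^*(t)),0)\big)=(x_0,u_-(x_0),0)$ is a trajectory of~\eqref{c}, hence a fixed point (so $\frac{\partial H}{\partial x}(x_0,u_-(x_0),0)=0$ is a \emph{consequence}, not an input), and Proposition~\ref{long} places it in $\tilde{\mathcal{A}}_H$. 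For non-emptiness the paper argues by contradiction in two cases, the second using Poincar\'e recurrence on the Mather set; your direct argument via an extremum of $u_-$ is more elementary and perfectly viable, though it is cleaner at a \emph{minimum} $x_*$ of $u_-$ (where $0\in D^-u_-(x_*)$ and the supersolution inequality gives $g(x_*)\ge 0$, hence $g(x_*)=0$). In summary: your ODE-level verification buys a self-contained argument independent of the calibrated-curve machinery, at the price of a careful semiconcavity/viscosity step; the paper's variational route is shorter because the cited weak KAM propositions absorb all the regularity.
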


\begin{proof}
Since $H$ is strictly convex and reversible in $p$, then by the Legendre transform we have  
\[
L(x,u_-(x),0)=\sup_{p\in T^*_xM}-H(x,u_-(x),p)=-\inf _{p\in T^*_xM}H(x,u_-(x),p)=-H(x,u_-(x),0),\ \forall x\in M.
\]
For any $(x,u_-(x),0)\in \mathcal{K}_H$, let $\g^*(t)\equiv x$ for $t\leq 0$. Then $u_-(\g^*(t))\equiv u_-(x)$ and $\dot{\g}^*(t)\equiv 0$. 
Note that 
\[
\int_{t}^0L(\g^*(s),u_-(\g^*(s)),\dot{\g}^*(s))\ ds=\int_{t}^0L(x,u_-(x),0)\ ds=0,\quad \forall t\leq 0.
\]
Thus, we get that 
\[
u_-(x)-u_-(\g^*(t))=\int_{t}^0L(\g^*(s),u_-(\g^*(s)),\dot{\g}^*(s))\ ds,\quad \forall t\leq 0,
\]
implying that $\g^*$ is a $(u_-,L,0)$-calibrated curve with $\g^*(0)=x$. Thus, by Proposition \ref{solu} we get that
\[
\Big(\g^*(t),u_-(\g^*(t)),\frac{\partial L}{\partial v}\big(\g^*(t),u_-(\g^*(t)),\dot{\gamma}^*(t)\big)\Big)=(x,u_-(x),0),\quad \forall t\leq 0
\]
satisfies equation \eqref{c}. 
It means that $(x,u_-(x),0)$ is a fixed point of the flow $\Phi^H_t$. From Proposition \ref{long}, the $\alpha$-limit set of $(x,u_-(x),0)$ is contained in the Aubry set $\tilde{\mathcal{A}}_{H}$, and thus we deduce that $(x,u_-(x),0)\in \tilde{\mathcal{A}}_{H}$. Since each point in $\mathcal{K}_H$ is a fixed point, then $\mathcal{K}_H\subset\tilde{\mathcal{M}}_{H}$.

Next we show that $\mathcal{K}_H$ is non-empty. Assume by contradiction that $\mathcal{K}_H=\emptyset$.
	There would be two possibilities: 
	(Case I) $H(x,u_-(x),0)>0$,  $\forall x\in M$; 
	(Case II) $H(x,u_-(x),0)<0$, $\forall x\in M$.

({\bf Case I}): for any $(x,p)\in T^*M$, since $H(x,u_-(x),p)\geq H(x,u_-(x),0)>0$, then the set
\[
\{(x,u_-(x),p):  H(x,u_-(x),p)=0\}=\emptyset.
\] 
Recall \eqref{bh}, i.e.,  $\tilde{\mathcal{A}}_H\subset \{(x,u_-(x),p):  H(x,u_-(x),p)=0\}$. Since  $\tilde{\mathcal{A}}_H$ is non-empty, then $\{(x,u_-(x),p):  H(x,u_-(x),p)=0\}$ is non-empty, a contradiction.

({\bf Case II}): Since 
\[
L(x,u_-(x),0)=-H(x,u_-(x),0)>0,\quad \forall x\in M,
\]
then 
\begin{align}\label{contrad}
	L(x,u_-(x),v)\geq L(x,u_-(x),0)>0,\quad \forall (x,v)\in TM.
\end{align}
Recall that the Mather set $\tilde{\mathcal{M}}_L\subset \tilde{\mathcal{A}}_L$ is non-empty. Taking an arbitrary $(x_0,u_0,v_0)\in \tilde{\mathcal{M}}_L$, let $(x(t),u(t),\dot{x}(t))=\Phi^L_t(x_0,u_0,v_0)$ for $t\in\mathbb{R}$. Then $x(t)$ is a $(u_-,L,0)$-calibrated curve implying that 
\begin{align}\label{cali}
u_-(x(t))-u_-(x_0)=\int_0^tL(x(s),u_-(x(s)),\dot{x}(s))\ ds,\quad \forall t>0.	
\end{align}
In fact, since $(x_0,u_0,v_0)\in \tilde{\mathcal{M}}_L$, then by the definition of the Aubry set and   \eqref{bh}, one can deduce that  $u_0=u_-(x_0)$ and $v_0=\frac{\partial H}{\partial p}(x_0,u_-(x_0),Du_-(x_0))$.
In view of Proposition \ref{pr803}, there is a $(u_-,L,0)$-calibrated curve $\gamma:(-\infty,+\infty)\to M$ with $\gamma(0)=x_0$ and $\dot{\gamma}(0)=\frac{\partial H}{\partial p}(x_0,u_-(x_0),Du_-(x_0))$. Then by Proposition \ref{solu}, $$(\gamma(t),u_-(\gamma(t)),\frac{\partial H}{\partial p}(\gamma(t),u_-(\gamma(t)),Du_-(\gamma(t)))=\Phi^L_t(x_0,u_0,v_0)$$ for all $t\in\R$. Thus, $x(t)=\gamma(t)$ for all $t\in\R$.

Since  $(x_0,u_0,v_0)\in \tilde{\mathcal{M}}_L$, then $(x_0,u_0,v_0)$ belongs to the support of some $\Phi^L_t$-invariant probability measure and thus by Poincar\'e's recurrence theorem we get $(x_0,u_0,v_0)\in \omega\big(x_0,u_0,v_0\big)$, where $\omega\big(x_0,u_0,v_0\big)$ denotes the $\omega$-limit set of the orbit $(x(t),u(t),\dot{x}(t))$. Thus, there exist $\{t_n\}_{n\in \mathbb{N}}$ with $t_n\to +\infty$, such that $|x(t_n)-x_0|\leq \frac{1}{n}$. By \eqref{cali}, we deduce that
\begin{align}\label{contra}
\int_0^{t_n}L(x(s),u_-(x(s)),\dot{x}(s))\ ds=|u_-(x(t_n))-u_-(x_0)|\leq K_{u_-}|x(t_n)-x_0|\leq \frac{K_{u_-}}{n},
\end{align}
where $K_{u_-}>0$ is the Lipschitz constant of $u_-$. For $n$ large enough, combining \eqref{contrad} and \eqref{contra} leads to a contradiction.

Hence, $\mathcal{K}_H$ is non-empty. In view of the compactness of $M$, it is clear that $\mathcal{K}_H$ is also compact. 
\end{proof}

\begin{prop}\label{Mat}
$\mathcal{K}_H=\tilde{\mathcal{M}}_H$.	
\end{prop}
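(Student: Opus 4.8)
We have already established $\mathcal{K}_H \subseteq \tilde{\mathcal{M}}_H$ in Proposition \ref{key2}, so the task is the reverse inclusion $\tilde{\mathcal{M}}_H \subseteq \mathcal{K}_H$. Equivalently, one must show that every Mather measure $\mu$ is supported on fixed points of the form $(x, u_-(x), 0)$ with $H(x,u_-(x),0)=0$. The plan is to take an arbitrary Mather measure $\mu \in \mathfrak{M}$ and an arbitrary point $(x_0, u_0, p_0) \in \operatorname{supp}(\mu)$, and to prove that $p_0 = 0$ and $H(x_0, u_0, p_0) = 0$; the identity $u_0 = u_-(x_0)$ comes for free from \eqref{bh} since $\operatorname{supp}(\mu) \subseteq \tilde{\mathcal{A}}_H \subseteq G_{u_-}$.

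\textbf{Key steps.} First, push $\mu$ forward through the Legendre transform to get a $\Phi^L_t$-invariant measure $\hat\mu = \mathcal{L}\sharp\mu$ supported in $\tilde{\mathcal{M}}_L$, and let $(x(t),u(t),v(t)) = \Phi^L_t(x_0,u_0,v_0)$ be the orbit through a typical point $(x_0,u_0,v_0) \in \operatorname{supp}(\hat\mu)$ with $u_0 = u_-(x_0)$. As in the proof of Proposition \ref{key2}, $x(t)$ is a $(u_-,L,0)$-calibrated curve on all of $\mathbb{R}$ (using Proposition \ref{pr803} and Proposition \ref{solu}), so $u(t) = u_-(x(t))$ and
\[
u_-(x(t_2)) - u_-(x(t_1)) = \int_{t_1}^{t_2} L(x(s), u_-(x(s)), \dot x(s))\, ds, \quad \forall\, t_1 \le t_2.
\]
Second, apply Poincar\'e recurrence on $\operatorname{supp}(\hat\mu)$ to produce times $t_n \to +\infty$ with $(x(t_n), u(t_n), v(t_n)) \to (x_0,u_0,v_0)$; by the Lipschitz bound on $u_-$, the calibration identity gives $\int_0^{t_n} L(x(s), u_-(x(s)), \dot x(s))\, ds = |u_-(x(t_n)) - u_-(x_0)| \to 0$. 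Third — and this is the heart of the argument — use reversibility. By the Legendre computation in Proposition \ref{key2}, $L(x, u_-(x), 0) = -H(x, u_-(x), 0)$, and one should show that $L(x, u_-(x), v) \ge -H(x, u_-(x), 0) = L(x,u_-(x),0)$ with equality only when $v = 0$ and $H(x,u_-(x),0)=0$; more precisely, strict convexity of $L(x,u_-(x),\cdot)$ gives $L(x,u_-(x),v) > L(x,u_-(x),0)$ for $v \ne 0$. Combined with the fact (from $\mathcal{K}_H \ne \emptyset$ together with a sign analysis like Cases I–II of Proposition \ref{key2}) that $H(x,u_-(x),0) \le 0$ cannot hold strictly everywhere and in fact $\min_x H(x,u_-(x),0)=0$ once $\mathcal{K}_H\subseteq\tilde{\mathcal{M}}_H$ is known — actually the cleaner route is: since the integrand $L(x(s),u_-(x(s)),\dot x(s))$ has average zero along the invariant measure $\hat\mu$ (the time-average of $L$ over a $\Phi^L_t$-invariant measure in the Mather set vanishes, because the calibration gives $u_-(x(T))-u_-(x_0) = \int_0^T L\,ds$ and the left side is bounded while $\hat\mu$-a.e.\ orbit is recurrent), and since $L(x,u_-(x),v) \ge -H(x,u_-(x),0)$, we would get $\int -H(x,u_-(x),0)\, d\sigma \le 0$ where $\sigma = \pi_x\sharp\hat\mu$.

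\textbf{Finishing.} To conclude I would argue pointwise $\hat\mu$-almost everywhere: from recurrence and the calibration identity, for $\hat\mu$-a.e.\ orbit the Birkhoff average $\lim_{T\to\infty} \frac1T \int_0^T L(x(s),u_-(x(s)),\dot x(s))\, ds$ equals $0$; since the integrand is $\ge -H(x(s),u_-(x(s)),0) \ge 0$ wherever $\mathcal{K}_H$-type reasoning forces the right sign, one deduces $L(x(s),u_-(x(s)),\dot x(s)) = 0$ for a.e.\ $s$, hence $\dot x(s) = 0$ and $H(x(s),u_-(x(s)),0)=0$ for a.e.\ $s$ along the orbit. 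By continuity this forces $v_0 = \dot x(0) = 0$ and $H(x_0,u_-(x_0),0)=0$, i.e.\ $(x_0,u_0,v_0) \in \mathcal{L}(\mathcal{K}_H)$, so $\tilde{\mathcal{M}}_L \subseteq \mathcal{L}(\mathcal{K}_H)$ and pulling back through $\mathcal{L}^{-1}$ gives $\tilde{\mathcal{M}}_H \subseteq \mathcal{K}_H$. The main obstacle I anticipate is pinning down the correct sign of $H(x, u_-(x), 0)$ on the support of the Mather measure: one needs that it cannot be strictly negative there (the Case II argument of Proposition \ref{key2} rules out $H(x,u_-(x),0) < 0$ everywhere, but here one needs it on the relevant invariant set), and reversibility plus the structure $\tilde{\mathcal{A}}_H \subseteq \{H(x,u_-(x),p)=0\}$ together with $H(x,u_-(x),\cdot)$ attaining its minimum at $p=0$ should close this gap — making the inequality $H(x,u_-(x),0) \le 0$ on $\mathcal{A}_H$ automatic, while the average-zero condition then forces equality and $\dot x \equiv 0$.
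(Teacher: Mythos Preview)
Your approach is correct and rests on the same core idea as the paper's proof, but your execution takes a detour through the Lagrangian side and Birkhoff averages that the paper avoids. Your crucial sign observation---that on the projected Aubry set $\mathcal{A}_H$ one has $H(x,u_-(x),0)\le 0$ because $H(x,u_-(x),\cdot)$ attains its minimum at $p=0$ (reversibility) and equals $0$ at $p=Du_-(x)$ (by \eqref{bh})---is exactly right, and from it your chain ``$L\ge 0$ along orbits $+$ calibration $+$ recurrence $\Rightarrow$ $L\equiv 0$ $\Rightarrow$ $\dot x\equiv 0$ and $H(x,u_-(x),0)=0$'' goes through (for $\hat\mu$-a.e.\ point, then pass to the support by closedness of $\mathcal{K}_H$). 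The paper simply observes the dual Hamiltonian statement: strict convexity plus reversibility give $\frac{\partial H}{\partial p}(x,u,p)\cdot p\ge 0$ with equality only at $p=0$, and along any orbit in $\tilde{\mathcal{M}}_H$ (where $H\equiv 0$) one has $\frac{d}{dt}u_-(x(t))=\frac{\partial H}{\partial p}\cdot p\ge 0$; a strictly increasing bounded function cannot exist along a recurrent orbit, so $p(t)\equiv 0$. Since $L=\frac{\partial H}{\partial p}\cdot p$ on the level $\{H=0\}$ via Legendre duality, the two arguments are literally the same inequality viewed from opposite sides; the paper's phrasing just lets you conclude by elementary monotonicity instead of invoking Birkhoff averages and a.e.\ reasoning. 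Your write-up would benefit from committing to the sign observation up front rather than arriving at it after the hedged ``main obstacle'' discussion.
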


\begin{proof}
	Since $H(x,u,p)$ is strictly convex in $p$ and $H(x,u,p)=H(x,u,-p)$ for all $(x,u,p)\in T^*M\times\R$, then it is direct to see that $\frac{\partial H}{\partial p}(x,u,p)\cdot p\geq 0$, where equality holds if and only if $p=0$. 
	
	Let $(x(t),u(t),p(t))$ be an arbitrary trajectory in the Mather set $\tilde{\mathcal{M}}_H$. Then in view of $\tilde{\mathcal{M}}_H\subset \tilde{\mathcal{A}}_H\subset G_{u_-}$ and the differentiability of $u_-$ on $\mathcal{A}_H$, we deduce that $$(x(t),u(t),p(t))=(x(t),u_-(x(t)),Du_-(x(t))).$$
	Note that
	\begin{align*}
	\frac{d}{dt}u_-(x(t))&=\frac{\partial H}{\partial p}(x(t),u_-(x(t)),Du_-(x(t)))\cdot Du_-(x(t))-H(x(t),u_-(x(t)),Du_-(x(t)))\\
	&=\frac{\partial H}{\partial p}(x(t),u_-(x(t)),Du_-(x(t)))\cdot Du_-(x(t)).
	\end{align*}
Then $\frac{d}{dt}u_-(x(t))\geq 0$ and $\frac{d}{dt}u_-(x(t))=0$ if and only if $Du_-(x(t))=0$. If
there is $t_0\in\R$ such that $Du_-(x(t_0))=0$, then in view of the proof of the above proposition, $(x(t_0),u_-(x(t_0)),Du_-(x(t_0)))$ is a fixed point of $\Phi^H_t$. If $Du_-(x(t))\neq0$ for all $t\in\R$, then
$\frac{d}{dt}u_-(x(t))>0$ for all $t\in\R$, which contradicts the recurrence property of points in the Mather set $\tilde{\mathcal{M}}_H$. Hence, one can deduce that $\tilde{\mathcal{M}}_H$ consists of fixed points which have the form $(x,u_-(x),Du_-(x))$ with $H(x,u_-(x),Du_-(x))=0$. So far, we have proved that $\tilde{\mathcal{M}}_H\subset \mathcal{K}_H$, which together with Proposition \ref{key2} finishes the proof.
\end{proof}

\subsection{Proof of Theorem \ref{ma}}

For each $m\in\mathcal{P}(M)$, $H_m(x,u,p):=H(x,u,p)-F(x,m)$ is a Hamiltonian on $T^*M\times \R$. When $H$ satisfies (H1)-(H4), so does $H_m$. Thus, $H_m$ is admissible for all $m\in \mathcal{P}(M)$. Let $a_m\in\mathbb{R}$ be such that the Ma\~n\'e critical value of $H(x,a_m,p)-F(x,m)$ is $0$, that is,
\[
0=\inf_{x\in M}\Big(L(x,a_m,0)+F(x,m)\Big)=-\sup_{x\in M}\Big(H(x,a_m,0)-F(x,m)\Big). 
\]

\begin{lem}\label{b1}
There is a constant $D_1>0$ such that $|a_m|\leq D_1$ for all $m\in\mathcal{P}(M)$.
\end{lem}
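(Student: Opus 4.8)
The plan is to extract a uniform bound on $a_m$ directly from the defining relation
\[
0=\inf_{x\in M}\bigl(L(x,a_m,0)+F(x,m)\bigr),
\]
using the strict monotonicity hypothesis (H3) together with the uniform bound $F_\infty$ on $F$ from (F1). First I would recall from the proof of Proposition \ref{key2} that reversibility and convexity give $L(x,u,0)=-H(x,u,0)$ for every $(x,u)$, so the relation becomes $\sup_{x\in M}\bigl(H(x,a_m,0)-F(x,m)\bigr)=0$, i.e. $\sup_{x\in M}H(x,a_m,0)=\sup_{x\in M}F(x,m)\le F_\infty$ is not quite right because the supremum does not split; instead I would work with the equivalent statement that for all $x$, $H(x,a_m,0)\le F(x,m)\le F_\infty$, with equality for some $x$. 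So the key inequalities are: there exists $x_m\in M$ with $H(x_m,a_m,0)=F(x_m,m)\in[-F_\infty,F_\infty]$, and $H(x,a_m,0)\le F_\infty$ for all $x$.

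Next, I would fix any reference point, say evaluate at the maximizer $x_m$, and use (H3): the function $u\mapsto H(x_m,u,0)$ has derivative in $(\delta,\lambda]$, so it is strictly increasing with $H(x_m,u,0)-H(x_m,0,0)\ge \delta u$ for $u\ge 0$ and $\le \lambda u$ for $u\ge 0$ (and symmetric bounds for $u<0$). Setting $C_0:=\sup_{x\in M}|H(x,0,0)|$ (finite since $H$ is continuous and $M$ compact), the identity $H(x_m,a_m,0)=F(x_m,m)$ together with $|F(x_m,m)|\le F_\infty$ yields $|H(x_m,a_m,0)-H(x_m,0,0)|\le F_\infty+C_0$. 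If $a_m\ge 0$ this forces $\delta a_m\le F_\infty+C_0$, and if $a_m<0$ the analogous lower estimate gives $\delta|a_m|\le F_\infty+C_0$; either way $|a_m|\le (F_\infty+C_0)/\delta=:D_1$, which is independent of $m$.

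The only subtlety — and the step I would be most careful about — is making sure the monotonicity estimate is applied at a point where I actually control the value of $H(\cdot,a_m,0)$. The supremum relation only pins down $H(\cdot,a_m,0)$ at the maximizing point $x_m$, not uniformly, so I must use that maximizer rather than an arbitrary point; the existence of $x_m$ is guaranteed by compactness of $M$ and continuity of $H$ and of $x\mapsto F(x,m)$ (the latter from (F1)). One should also note that $C_0$ and $F_\infty$ are finite by (F1) and by $H\in C^3$ on the compact set $M\times\{0\}\times\{0\}$, so $D_1$ is a genuine constant. Everything else is the routine one-variable mean value estimate from (H3), so no real obstacle remains.
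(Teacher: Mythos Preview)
Your argument is correct and follows essentially the same route as the paper: both proofs rewrite the defining relation as $\sup_{x\in M}\bigl(H(x,a_m,0)-F(x,m)\bigr)=0$, invoke the mean value theorem together with (H3) to get linear growth of $u\mapsto H(x,u,0)$ uniformly in $x$, and conclude from the boundedness of $F$. The only cosmetic difference is that you pin down an explicit constant $D_1=(F_\infty+C_0)/\delta$ by evaluating at the maximizer $x_m$, whereas the paper argues more qualitatively via uniform divergence of $H(x,a,0)$ as $a\to\pm\infty$.
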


\begin{proof}
For any $x\in M$, any $a\in\mathbb{R}$, 
\[
H(x,a,0)=H(x,0,0)+\frac{\partial H}{\partial u}(x,\theta,0)a,
\]
for some $\theta\in\mathbb{R}$ depending on $x$ and $a$. Recall that $0<\delta\leq \frac{\partial H}{\partial u}(x,u,p)$ for all $(x,u,p)\in T^*M\times\mathbb{R}$.

If $a>0$, then $H(x,a,0)\geq H(x,0,0)+\delta a$. So, we deduce that $\lim_{a\to+\infty}H(x,a,0)=+\infty$ uniformly in $x\in M$. If $a<0$, then $H(x,a,0)\leq H(x,0,0)+\delta a$. So, we deduce that $\lim_{a\to-\infty}H(x,a,0)=-\infty$ uniformly in $x\in M$.

By the definition of $a_m$, we get that
\[
0=\sup_{x\in M}(H(x,a_m,0)-F(x,m)).
\]
By the above arguments and the boundedness of $F$, it is clear that the set $\{a_m\}_{m\in\mathcal{P}(M)}$ is bounded.
\end{proof}

\begin{re}
Lemma \ref{b1} still holds ture when $H$ satisfies (H1)-(H3).
In fact, since $a_m$ satisfies 
\[
0=\inf_{u\in C^1(M)}\max_{x\in M}(H(x,a_m,Du(x))-F(x,m)),
\]	
then 
\begin{align}\label{404}
0\leq \max_{x\in M}(H(x,a_m,0)-F(x,m)).
\end{align}
On the other hand, for any $u\in C^1(M)$, there must be a point $x_u\in M$ such that $Du(x_u)=0$ since $M$ is compact and closed.
Thus, we have that
\[
\max_{x\in M}(H(x,a_m,Du(x))-F(x,m))\geq H(x_u,a_m,0)-F(x_u,m),
\]
implying
\begin{align}\label{405}
0\geq \min_{x\in M}(H(x,a_m,0)-F(x,m)).
\end{align}
By similar arguments used in the proof of Lemma \ref{b1}, we can deduce from \eqref{404}, \eqref{405} and (H3) that $\{a_m\}_{m\in\mathcal{P}(M)}$ is bounded.
\end{re}

The following result is a direct consequence of Proposition \ref{li} and Lemma \ref{b1}.
\begin{lem}\label{b2}
For each $m\in\mathcal{P}(M)$, let $w_m$ denote an arbitrary  viscosity solution of 
\begin{align}\label{hjm}
H(x,a_m,Dw)-F(x,m)=0.
\end{align} 
Then $\{w_m\}_{m\in\mathcal{P}(M)}$ is equi-Lipschitz with a Lipschitz constant $D_2>0$ given by 
\[
D_2:=\sup\{L(x,u,v)+F_\infty: (x,u,v)\in TM\times\mathbb{R},\ |u|\leq D_1, \ \|v\|_x=1\}.
\]
\end{lem}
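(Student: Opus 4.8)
The plan is to combine the a priori bound $|a_m|\le D_1$ from Lemma \ref{b1} with the equi-Lipschitz estimate of Proposition \ref{li}, suitably adapted to the $u$-dependent Hamiltonian $H(x,a_m,p)-F(x,m)$. Since $a_m$ is a \emph{fixed} real number for each $m$, the Hamiltonian $p\mapsto H(x,a_m,p)-F(x,m)$ is a genuine Tonelli Hamiltonian on $T^*M$ (it inherits $C^2$-regularity, strict convexity in $p$ from (H1), and superlinearity from (H2)), and its Ma\~n\'e critical value is $0$ by the very definition of $a_m$. So equation \eqref{hjm} is exactly an equation of the form \eqref{lab3} with $c(K)=0$, and Proposition \ref{li} applies to give $\mathrm{Lip}(w_m)\le B_m + 0 = B_m$, where $B_m=\sup\{l_m(x,v): (x,v)\in TM,\ \|v\|_x=1\}$ and $l_m$ is the Lagrangian associated to $H(x,a_m,\cdot)-F(x,m)$.

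The remaining point is to bound $B_m$ uniformly in $m$. First I would identify the Lagrangian: since $F(x,m)$ does not depend on $p$, the Legendre transform gives $l_m(x,v)=\sup_{p}\{\langle p,v\rangle_x - H(x,a_m,p)+F(x,m)\} = L(x,a_m,v)+F(x,m)$, where $L$ is the contact Lagrangian of $H$ evaluated at the fixed height $u=a_m$. Hence
\[
B_m=\sup_{\|v\|_x=1}\big(L(x,a_m,v)+F(x,m)\big)\le \sup_{\|v\|_x=1}L(x,a_m,v)+F_\infty.
\]
Now invoke Lemma \ref{b1}: $|a_m|\le D_1$ for every $m$, so the supremum over $x\in M$, over unit vectors $v$, and over $m\in\mathcal{P}(M)$ is dominated by the supremum of $L(x,u,v)$ taken over the compact set $\{(x,u,v)\in TM\times\R : |u|\le D_1,\ \|v\|_x=1\}$, plus $F_\infty$. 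That is precisely the constant $D_2$ in the statement. Since $L$ is continuous (indeed $C^2$) and this set is compact, $D_2<+\infty$, and $\mathrm{Lip}(w_m)\le B_m\le D_2$ for all $m$.

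I do not expect any serious obstacle here, since the result is labelled a direct consequence of Proposition \ref{li} and Lemma \ref{b1}; the only thing requiring a line of care is the bookkeeping that the effective Lagrangian of $H(x,a_m,p)-F(x,m)$ really is $L(x,a_m,v)+F(x,m)$ (so that the $p$-independent term $F$ passes through the Legendre transform as an additive constant in $x$), together with using compactness of $M$ and of the interval $[-D_1,D_1]$ to turn the $m$-dependent bound $B_m$ into the uniform constant $D_2$. One should also note that Proposition \ref{li} is stated for a single Tonelli Hamiltonian but its constant $B$ depends only on the Lagrangian; applying it for each $m$ and then taking the supremum of the constants is legitimate precisely because of the uniform control just described.
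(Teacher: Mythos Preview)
Your proposal is correct and follows exactly the route the paper intends: the paper itself gives no proof but states that the lemma is a direct consequence of Proposition~\ref{li} and Lemma~\ref{b1}, and you have carried out precisely that deduction, including the identification of the effective Lagrangian $l_m(x,v)=L(x,a_m,v)+F(x,m)$ and the passage to the uniform bound $D_2$ via compactness and $|a_m|\le D_1$.
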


Define 
\[
h^m_t(x,y):=\inf_{\g}\int_0^t\Big(L(\g(s),a_m,\dot{\g}(s))+F(\g(s),m)\Big)\ ds,\quad \forall x,y\in M,
\]
where the infimum is taken among the absolutely continuous curves $\g:[0,t]\to M$ with $\g(0)=x$ and $\g(t)=y$. 
By definition and Lemma \ref{b2},  for any $x$, $y\in M$ and any $t>0$, we deduce that
\begin{align}\label{b4}
h^m_t(x,y)\geq w_m(y)-w_m(x)\geq -D_2\mathrm{diam}(M), \quad \forall m\in\mathcal{P}(M),
\end{align}
which means that $h^m_t(x,y)$ is bounded from below.

The proof of the following lemma  is quite similar to the one of Proposition \ref{bound}, thus we omit it here.
\begin{lem}\label{b3}
For each given $t>0$, there is a constant $E_t\in\mathbb{R}$ such that for any $x$, $y\in M$, there is  a $C^\infty$ curve $\g:[0,t]\to M$ with $\g(0)=x$, $\g(t)=y$ and 
\[
\int_0^t\Big(L(\g(s),a_m,\dot{\g}(s))+F(\g(s),m)\Big)\ ds\leq E_t,\quad \forall m\in\mathcal{P}(M),
\]
where $E_t$ is given by
\[
 E_t:=t\tilde{E}_t, \quad \tilde{E}_t:=\sup \Big\{L(x,u,v)+F_\infty: (x,u,v)\in TM\times\mathbb{R},\ |u|\leq D_1, \ \|v\|_x\leq \frac{\mathrm{diam}(M)}{t}\Big\}.
 \]
\end{lem}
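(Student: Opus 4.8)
The plan is to imitate the proof of Proposition \ref{bound} from \cite{Fat-b}, replacing the classical action by the present one and keeping careful track of the uniformity in $m$. First I would fix $t>0$ and, given $x,y\in M$, invoke the Hopf--Rinow theorem (applicable since $M$ is compact, hence complete): there is a minimizing geodesic joining $x$ to $y$ of length $d(x,y)\le\mathrm{diam}(M)$. Reparametrizing it affinely on $[0,t]$ produces a curve $\g:[0,t]\to M$ with $\g(0)=x$, $\g(t)=y$, which is $C^\infty$ (an affine reparametrization of a geodesic is again a geodesic, hence a smooth solution of the geodesic equations) and has constant speed $\|\dot\g(s)\|_{\g(s)}=d(x,y)/t\le\mathrm{diam}(M)/t$ for every $s\in[0,t]$. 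When $x=y$ one simply takes the constant curve $\g\equiv x$, which also satisfies these properties.

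Next I would bound the integrand along $\g$ uniformly in $m$. By Lemma \ref{b1} we have $|a_m|\le D_1$ for every $m\in\mathcal{P}(M)$, and by (F1) we have $F(\g(s),m)\le F_\infty$. Since $H$ is superlinear in $p$ (assumption (H2)), the contact Lagrangian $L$ is finite-valued and continuous on $TM\times\R$; hence, restricted to the compact set $\{(z,u,v)\in TM\times\R:\ |u|\le D_1,\ \|v\|_z\le\mathrm{diam}(M)/t\}$, the supremum $\tilde E_t$ is a finite real number. For every $s\in[0,t]$ and every $m$, the point $(\g(s),a_m,\dot\g(s))$ lies in that compact set, so
\[
L(\g(s),a_m,\dot\g(s))+F(\g(s),m)\le L(\g(s),a_m,\dot\g(s))+F_\infty\le\tilde E_t .
\]
Integrating over $[0,t]$ yields $\int_0^t\big(L(\g(s),a_m,\dot\g(s))+F(\g(s),m)\big)\,ds\le t\tilde E_t=E_t$ for all $m\in\mathcal{P}(M)$, which is the assertion; observe that $E_t$ depends neither on $x,y$ nor on $m$.

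The only point requiring a little care is producing a connecting curve that is simultaneously $C^\infty$ and of controlled speed; this is exactly what the proof of Proposition \ref{bound} does, and here it is even more transparent because a single minimizing geodesic (rather than a finite concatenation) already suffices. The uniformity in $m$ --- the feature genuinely needed later --- is not an obstacle: it is immediate from the $m$-independent bounds $|a_m|\le D_1$ (Lemma \ref{b1}) and $F(\cdot,m)\le F_\infty$ (assumption (F1)), so the same curve $\g$ and the same constant $E_t$ work for all $m$ at once.
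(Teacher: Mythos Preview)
Your argument is correct and is precisely the adaptation of Fathi's Proposition~\ref{bound} that the paper has in mind (the paper omits the proof, pointing to that proposition): pick a constant-speed minimizing geodesic so that $\|\dot\gamma\|\le\mathrm{diam}(M)/t$, then bound $L(\gamma,a_m,\dot\gamma)+F(\gamma,m)$ pointwise by $\tilde E_t$ using $|a_m|\le D_1$ and $|F|\le F_\infty$, and integrate. There is nothing to add.
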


Let $w_m'(x):=w_m(x)-w_m(0)$， where $w_m$ is as in Lemma \ref{b2}. Then $w_m'$ is still a viscosity solution of \eqref{hjm} and $w_m'(0)=0$.
From Lemma \ref{b2},  Lemma \ref{b3} and \cite[Lemma 5.3.2 (4)]{Fat-b}, one can deduce that for any given $t_0>0$, if $t\geq t_0$, then 
\begin{align}\label{b5}
h^m_t(x,y)\leq E_{t_0}+2\|w_m'\|_\infty\leq E_{t_0}+2D_2\mathrm{diam}(M),\quad \forall x,\ y\in M, \quad \forall m\in\mathcal{P}(M).
\end{align}

Based on \eqref{b4} and \eqref{b5}, we can get the following result
\begin{prop}\label{b6}
Given any $t_0>0$, for any $\phi\in C(M)$, there is a constant $D_{t_0,\phi}>0$ such that 
\[
|T^m_t\phi(x)|\leq D_{t_0,\phi},\quad \forall (x,t)\in M\times[t_0,+\infty),\quad \forall m\in\mathcal{P}(M),
\] 	
 where $\{T^m_t\}_{t\geq 0}$ denotes the backward solution semigroup associated with $L(x,u,v)+F(x,m)$.
\end{prop}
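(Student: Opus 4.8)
The plan is to sandwich $\phi$ between two \emph{constants} that are independent of $m$ and that serve, respectively, as a viscosity supersolution and a viscosity subsolution of the Hamilton--Jacobi equation
\[
H(x,u,Du)=F(x,m)
\]
for every $m\in\mathcal{P}(M)$, and then to transport this sandwich along the semigroup by its order properties. Set $C_0:=\max_{x\in M}|H(x,0,0)|$, which is finite since $M$ is compact. By \textbf{(H3)}, for each $x\in M$ the map $u\mapsto H(x,u,0)$ is strictly increasing with derivative in $[\delta,\lambda]$, hence $H(x,c,0)\ge -C_0+\delta c$ for every $c\ge 0$ and $H(x,c,0)\le C_0+\delta c$ for every $c\le 0$. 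Combining this with $|F(x,m)|\le F_\infty$ from \textbf{(F1)}, the $m$-independent constant $c_0:=(C_0+F_\infty)/\delta$ satisfies $H(x,c,0)\ge F(x,m)$ for all $x\in M$, $m\in\mathcal{P}(M)$ whenever $c\ge c_0$, and $H(x,c,0)\le F(x,m)$ for all $x\in M$, $m\in\mathcal{P}(M)$ whenever $c\le -c_0$. Now observe that a constant function is a viscosity supersolution (resp.\ subsolution) of $H(x,u,Du)=F(x,m)$ exactly when the corresponding pointwise inequality above holds: a $C^1$ function touching a constant from below (resp.\ above) at a point of $M$ has vanishing differential there, so the definition of viscosity super-/subsolution collapses to $H(x,c,0)\ge F(x,m)$ (resp.\ $\le$) for all $x$. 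Hence, writing $R:=\|\phi\|_\infty$, the constant $\bar c:=\max\{R,c_0\}$ is a viscosity supersolution and $\underline c:=-\bar c\ (\le -c_0)$ is a viscosity subsolution of $H(x,u,Du)=F(x,m)$, for every $m\in\mathcal{P}(M)$, while $\underline c\le\phi\le\bar c$ on $M$.

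I would then invoke two standard facts about the backward solution semigroup $\{T^m_t\}_{t\ge 0}$ of $L(x,u,v)+F(x,m)$ (valid under \textbf{(H1)}--\textbf{(H3)}, cf.\ \cite{WWY1,WWY2}): it is order preserving, i.e.\ $\psi_1\le\psi_2\Rightarrow T^m_t\psi_1\le T^m_t\psi_2$; and along the orbit of a viscosity subsolution (resp.\ supersolution) of $H(x,u,Du)=F(x,m)$ the map $t\mapsto T^m_t(\cdot)$ is non-decreasing (resp.\ non-increasing). Together with $T^m_0=\mathrm{id}$ and the previous paragraph, these give, for all $t\ge 0$, $x\in M$ and $m\in\mathcal{P}(M)$,
\[
\underline c=T^m_0\underline c\le T^m_t\underline c\le T^m_t\phi(x)\le T^m_t\bar c\le T^m_0\bar c=\bar c,
\]
so $|T^m_t\phi(x)|\le\max\{\|\phi\|_\infty,c_0\}$. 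Taking $D_{t_0,\phi}:=\max\{\|\phi\|_\infty,c_0\}$ proves the proposition; note in passing that this bound in fact holds for all $t\ge 0$ and does not depend on $t_0$.

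The one delicate point is the implicit, self-referential structure of the variational formula for $T^m_t\phi$, where the state argument of $L$ is $T^m_\tau\phi$ itself: this is precisely what blocks a direct comparison of $T^m_t\phi$ with the ``frozen'' action $h^m_t$ and makes a one-sided a priori estimate fail to close, since the supremum and infimum of $T^m_t\phi$ become coupled through the monotonicity of $L$ in $u$. Bracketing $\phi$ between the two global constant barriers and using only order preservation of $T^m_t$ bypasses this entirely; the structural ingredient that makes it work is the strict sign in \textbf{(H3)}, which turns sufficiently large (resp.\ large negative) constants into supersolutions (resp.\ subsolutions) uniformly in $m$. If one prefers not to quote ``the orbit of a sub/supersolution is monotone'' as a black box, the inequalities $T^m_t\underline c\ge\underline c$ and $T^m_t\bar c\le\bar c$ can be obtained directly: by the semigroup property and order preservation it suffices to establish them for small $t$, where a short contraction argument on a short time interval applies, using that $u\mapsto L(x,u,0)=-H(x,u,0)$ (the last equality by \textbf{(H1)} and \textbf{(H4)}) is strictly decreasing together with $L(x,\bar c,0)+F(x,m)\le 0\le L(x,\underline c,0)+F(x,m)$.
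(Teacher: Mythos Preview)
Your proof is correct and takes a genuinely different route from the paper's. The paper argues by comparison with the ``frozen'' classical action $h^m_t$ at the level $a_m$ (the constant at which the Ma\~n\'e critical value of $H(\cdot,a_m,\cdot)-F(\cdot,m)$ vanishes): it first proves Lemmas~\ref{b1}--\ref{b3} to bound $a_m$, the Lipschitz constant of $w_m$, and $h^m_t$ uniformly in $m$, and then runs a level--crossing argument tracking the first time $s_0$ at which $s\mapsto T^m_s\phi(\gamma(s))$ passes through $a_m$ along a suitable minimizer, so as to replace the self-referential $u$-argument of $L$ by the fixed value $a_m$ on $(s_0,t]$. Your argument sidesteps all of this machinery: the strict monotonicity \textbf{(H3)} manufactures $m$-uniform constant barriers $\underline c\le\phi\le\bar c$, and the comparison principle for the evolutionary equation $\partial_t u+H_m(x,u,Du)=0$ (which under \textbf{(H1)}--\textbf{(H3)} is standard and yields both order preservation of $T^m_t$ and the monotonicity of orbits through stationary sub/super\-solutions) closes the estimate in one line. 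The payoff is twofold: you avoid Lemmas~\ref{b1}--\ref{b3} and the bounds \eqref{b4}--\eqref{b5} entirely, and your bound $|T^m_t\phi|\le\max\{\|\phi\|_\infty,(C_0+F_\infty)/\delta\}$ holds for \emph{all} $t\ge 0$, not merely $t\ge t_0$; in particular it gives the downstream bound $\|u_m\|_\infty\le(C_0+F_\infty)/\delta$ directly. The paper's approach, by contrast, keeps everything at the variational level and does not invoke the evolutionary comparison principle as a black box, which has the minor virtue of being self-contained within the action formalism. One small remark: your alternative ``direct'' derivation of $T^m_t\bar c\le\bar c$ in the last paragraph uses $L(x,\bar c,0)=-H(x,\bar c,0)$, which genuinely needs \textbf{(H4)}; but your main argument via comparison for the evolution equation does not, so your proof, like the paper's, goes through under \textbf{(H1)}--\textbf{(H3)} alone.
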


\begin{proof} Let $T^{a_m}_t$ denote the backward Lax-Oleinik operator associated with $L(x,a_m,v)+F(x,m)$, i.e., for each $\varphi\in C(M)$ and each $t\geq 0$, 
	\[
	T^{a_m}_t\varphi(x):=
	\inf _{\gamma}\left\{\varphi(\gamma(0))+\int_{0}^{t} \big(L\left(\gamma(\tau),a_m, \dot{\gamma}(\tau)\right)+F(\gamma(\tau),m)\big) d \tau\right\},
	\]
	where the infimum is taken among absolutely continuous curves $\gamma:[0, t] \to M$ with $\gamma(t)=x .$ The infimum can be achieved.

{\em Boundedness from above}:  
for $(x,t)\in M\times[t_0,+\infty)$ with $T^m_t\phi(x)>a_m$, let $\gamma:[0,t]\to M$ be a minimizer of $T^{a_m}_t\phi(x)$. Consider the function $s\mapsto T^{m}_s\phi(\g(s))$ for $s\in(0,t]$. 
Since $T^m_0\phi(\g(0))=\phi(\g(0))$ and $T^m_t\phi(x)>a_m$, then there exists $s_0\in[0,t)$ such that $T^m_{s_0}\phi(\g(s_0))\leq \max\{\phi(\g(0)),a_m\}$ and 
$T^m_{s}\phi(\g(s))>a_m$ for $s\in(s_0,t]$. 
Hence, by (H3), \eqref{b4} and \eqref{b5}, we have that
\begin{align*}
T^m_t\phi(x) &\leq T^m_{s_0}\phi(\g(s_0))+\int_{s_0}^t\big(L(\gamma(s),T^m_{s}\phi(\g(s)),\dot{\gamma}(s))+F(\gamma(s),m)\big)ds\\
&\leq \max\{\phi(\g(0)),a_m\}+\int_{s_0}^t\big(L(\gamma(s),a_m,\dot{\gamma}(s))+F(\gamma(s),m)\big)ds\\
&\leq\|\phi\|_\infty+D_1+h^m_{t-s_0}(\gamma(s_0),x)\\
&\leq \|\phi\|_\infty+D_1+E_{t_0}+3D_2\mathrm{diam}(M).
\end{align*}
We have proved that $T^m_t\phi(x)$ is bounded from above by $\|\phi\|_\infty+2D_1+E_{t_0}+3D_2\mathrm{diam}(M)$ on $M\times[t_0,+\infty)$.

{\em Boundedness from below}: For $(x,t)\in M\times[t_0,+\infty)$ with $T^m_t\phi(x)<a_m$, let $\alpha:[0,t]\to M$ be a minimizer of $T^{m}_t\phi(x)$. 
Consider the function $s\mapsto T^m_s\phi(\alpha(s))$ for $s\in(0,t]$. 
Since $T^m_0\phi(\alpha(0))=\phi(\alpha(0))$ and $T^m_t\phi(x)<a_m$, 
then there exists $s_0\in[0,t)$ such that $T^m_{s_0}\phi(\alpha(s_0))\geq \min\{\phi(\alpha(s_0)),a_m\}$ and $T^m_s\phi(\alpha(s))<a_m$ for $s\in(s_0,t]$. Hence, by (H3) and \eqref{b4}, we have that
\begin{align*}
T^m_t\phi(x) &=T^m_{s_0}\phi(\alpha(s_0))+\int_{s_0}^t\big(L(\alpha(s),T^m_s\phi(\alpha(s)),\dot{\alpha}(s))+F(\alpha(s),m)\big)ds\\
&\geq \min\{\phi(\alpha(s_0)),a_m\}+\int_{s_0}^t\big(L(\alpha(s),a_m,\dot{\alpha}(s))+F(\alpha(s),m)\big)ds\\
&\geq-\|\phi\|_\infty-D_1+h^m_{t-s_0}(\alpha(s_0),x)\\
&\geq -\|\phi\|_\infty-D_1-D_2\mathrm{diam}(M),
\end{align*}
which shows that $T^m_t\phi(x)$ is bounded from below by $-\|\phi\|_\infty-2D_1-D_2\mathrm{diam}(M)$ on $M\times[t_0,+\infty)$.
	
\end{proof}

By Proposition \ref{conv}, for each $m\in\mathcal{P}(M)$,  the uniform limit of $T^m_t\phi$ as $t\to\infty$ exists and the limit function is the unique viscosity solution $u_m$ of 
\[
H(x,u,Du)-F(x,m)=0.
\] 
Therefore, by Proposition \ref{b6}, there is a constant $D_3>0$ such that
\begin{align}\label{b7}
\|u_m\|_\infty\leq D_3,\quad \forall m\in\mathcal{P}(M).
\end{align}
Note that $F$ is bounded, then by the above estimate and \cite[Lemma 4.1]{WWY2}, we deduce that $\{u_m\}_{m\in\mathcal{P}(\T)}$ is equi-Lipschitz with a Lipschitz constant 
\[\sup\{L(x,u,v)+F_\infty: (x,u,v)\in TM\times\mathbb{R},\ |u|\leq D_3,\ \|v\|_x=1\}.
\]

\begin{prop}\label{key3}
For any $m\in\mathcal{P}(M)$, let $u_m$ denote the unique viscosity solution of 
\[
H(x,u,Du)=F(x,m),\quad x\in M.
\]	
Let $m_j$, $m_0\in \mathcal{P}(M)$, $j\in\mathbb{N}$. 
If $m_j\stackrel{w^*}{\longrightarrow} m_0$, as $j\to\infty$, then $u_{m_j}$ converges uniformly to $u_{m_0}$ on $M$, as $j\to\infty$.
\end{prop}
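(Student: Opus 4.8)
The plan is to combine the uniform a priori bounds just obtained with the stability of viscosity solutions under uniform convergence. Since $\{u_m\}_{m\in\mathcal{P}(M)}$ is equi-Lipschitz and uniformly bounded (by $D_3$ in \eqref{b7}), the family $\{u_{m_j}\}_{j\in\mathbb{N}}$ is precompact in $C(M)$ by the Arzel\`a--Ascoli theorem. Hence it suffices to show that every uniformly convergent subsequence of $\{u_{m_j}\}$ has limit $u_{m_0}$; the convergence of the full sequence then follows by the usual subsequence argument.

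First I would translate the weak-$*$ convergence $m_j\stackrel{w^*}{\longrightarrow} m_0$ into convergence of the coupling terms. Since $M$ is compact, the Monge--Wasserstein distance $d_1$ metrizes the weak-$*$ topology on $\mathcal{P}(M)$, so $d_1(m_j,m_0)\to 0$ as $j\to\infty$. By assumption (F2) there is a constant $C>0$ with $\sup_{x\in M}|F(x,m_j)-F(x,m_0)|\leq C\,d_1(m_j,m_0)$, and therefore $F(\cdot,m_j)\to F(\cdot,m_0)$ uniformly on $M$; equivalently the Hamiltonians $H_{m_j}(x,u,p)=H(x,u,p)-F(x,m_j)$ converge to $H_{m_0}(x,u,p)$ uniformly on compact subsets of $T^*M\times\mathbb{R}$.

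Next, let $u^*$ be the uniform limit of some subsequence $\{u_{m_{j_k}}\}$. Each $u_{m_{j_k}}$ is a viscosity solution of $H_{m_{j_k}}(x,u,Du)=0$, these functions converge uniformly to $u^*$, and the Hamiltonians converge locally uniformly to $H_{m_0}$; by the standard stability property of viscosity solutions, $u^*$ is then a viscosity solution of $H_{m_0}(x,u,Du)=0$, i.e. of $H(x,u,Du)=F(x,m_0)$. Since $H_{m_0}$ satisfies (H1)--(H4), and in particular (H1)--(H3), this equation has a \emph{unique} viscosity solution (as recalled in Section~2), namely $u_{m_0}$; hence $u^*=u_{m_0}$. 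As this holds for every uniformly convergent subsequence, $u_{m_j}\to u_{m_0}$ uniformly on $M$.

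The point requiring care is not any single step but checking the normalization: one must observe that $u_m$ being the viscosity solution of $H(x,u,Du)=F(x,m)$ is exactly the same as being the unique viscosity solution of the admissible equation $H_m(x,u,Du)=0$ treated in Section~2, so that the uniqueness statement applies verbatim. A more self-contained alternative, in the dynamical spirit of the paper and avoiding any external stability theorem, is to estimate $\|u_{m_j}-u_{m_0}\|_\infty$ directly through the backward solution semigroups $T^{m_j}_t$ and $T^{m_0}_t$: writing $\eps_j:=C\,d_1(m_j,m_0)$ so that $|F(\cdot,m_j)-F(\cdot,m_0)|\leq\eps_j$, and using the strict monotonicity (H3), one shows that $T^{m_j}_t\phi$ and $T^{m_0}_t\phi$ applied to a common continuous datum $\phi$ stay within $O(\eps_j/\delta)$ uniformly in $t\geq 0$, and then lets $t\to\infty$ via Proposition~\ref{conv} to conclude $\|u_{m_j}-u_{m_0}\|_\infty=O(\eps_j/\delta)\to 0$. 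I would keep this route as a fallback if the appeal to the viscosity stability theorem is felt to be too coarse.
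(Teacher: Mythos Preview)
Your proof is correct and follows essentially the same approach as the paper: use (F2) together with $d_1$-metrization of weak-$*$ convergence to get locally uniform convergence of the Hamiltonians $H_{m_j}\to H_{m_0}$, then combine the uniform bound and equi-Lipschitz estimate on $\{u_m\}$ with the stability and uniqueness of viscosity solutions to conclude. The paper's proof is simply a terser version of your argument (it does not spell out Arzel\`a--Ascoli or the subsequence reasoning explicitly), and your alternative semigroup route is extra but not needed.
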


\begin{proof}
	Let $H_m(x,u,p):=H(x,u,p)-F(x,m)$. Then by (F2) and $m_j\stackrel{w^*}{\longrightarrow} m_0$ as $j\to\infty$, $H_{m_j}$ converges uniformly to $H_{m_0}$ on compact subsets of $T^*M\times\mathbb{R}$, as $j\to\infty$. Since $\{u_m\}_{m\in\mathcal{P}(M)}$ is uniformly bounded and equi-Lipschitz, then by the stability of viscosity solutions and the uniqueness of viscosity solutions of 
	\[
H(x,u,Du)=F(x,m_0),\quad x\in M,
\]
we conclude that $u_{m_j}$ converges uniformly to $u_{m_0}$ on $M$, as $j\to\infty$.
\end{proof}

\begin{re}
	Let us point out that Lemmas \ref{b1}, \ref{b2}, \ref{b3} and Propositions \ref{b6}, \ref{key3} still hold true under assumptions (H1)-(H3).
\end{re}

It is a position to give the proof of the main result of this paper. 

\begin{proof}[Proof of Theorem \ref{ma}]
For any $m\in \mathcal{P}(M)$, in view of Proposition \ref{Mat}, we know that
\[
\tilde{\mathcal{M}}_{H_m}=\mathcal{K}_{H_m}=\{(x,u_m(x),0):H_m(x,u_m(x),0)=0\}
\]
and that each point in $\mathcal{K}_{H_m}$ is a fixed point of $\Phi^{H_m}_t$. 
So,  any convex combination of atomic measures supported in $\mathcal{K}_{H_m}$ is a Mather measure for $H_m$. We use $\mathfrak{M}_{m}$ to denote the set of all convex combinations of atomic measures supported in $\mathcal{K}_{H_m}$.

Define the set-valued map 
	 \[
	 \Psi: \mathcal{P}(M) \rightrightarrows \mathcal{P}(M),\quad m \mapsto \Psi(m),
	 \] 
	 where 
	 \[
	 \Psi(m):=\left\{\pi_x \sharp \eta_{m}:\ \eta_{m} \in \mathfrak{M}_{m} \right\}.
	 \]
In view of the arguments in the Introduction, it is important to show that there exists a fixed point $\bar{m}$ of $\Psi$.

Note that the metric space $(\mathcal{P}(M),d_1)$ is convex and compact due to 
	 Prokhorov's theorem (see, for instance, \cite{bib:BB}).  
	 Since $\Psi$ has nonempty convex values, the only hypothesis of
	 Kakutani's theorem we need to check is that $\Psi$ has closed graph: for any pair of sequences $\{m_j\}_{j\in\mathbb{N}}\subset\mathcal{P}(M)$,  $\{\mu_{j}\}_{j\in\mathbb{N}}\subset\mathcal{P}(M)$ such that
	\[
	  m_j \stackrel{w^*}{\longrightarrow}m,\quad  \mu_{j} \stackrel{w^*}{\longrightarrow}\mu,\ \text{as}\ j\to+\infty \quad \text{and} \quad \mu_j\in \Psi(m_j) \quad \text{for all j} \in\mathbb{N},
	  \]
	  we aim to prove that $\mu \in\Psi(m)$.

Since $\mu_j\in \Psi(m_j)$, there are  measures $\eta_{m_j}\in \mathfrak{M}_{m_j}$ such that $\mu_j=\pi\sharp \eta_{m_j}$. From \eqref{b7}, we have that 
\[
\|u_{m_j}\|_\infty,\ \|u_{m}\|_\infty \leq D_3,\quad \forall j\in\mathbb{N}.
\]
By Proposition \ref{key2}, we get that
\[
\supp(\eta_{m_j})\subset M\times[-D_3,D_3]\times\{0\}=:K_0,\quad \forall j\in\mathbb{N}.
\]
Thus, the sequence $\{\eta_{m_j}\}_{j\in\mathbb{N}}$ is tight.
By Prokhorov's theorem again, passing to a subsequence if necessary, we may suppose that  
	 \begin{align*}
	  \eta_{m_j} \stackrel{w^*}{\longrightarrow}\eta,\  \text{as}\ j\to+\infty \quad \text{and} \quad \mu=\pi\sharp \eta,
	 \end{align*}
where $\eta\in \mathcal{P}_1(T^*M)$.
It suffices to show that $\eta\in\mathfrak{M}_{m}$. 
We first show that $\eta$ is a $\Phi^{H_m}_t$-invariant measure.
Since $\eta_{m_j}$ are $\Phi^{H_{m_j}}_t$-invariant measures, then we deduce that, for any given $t\in \R$,
\begin{align}\label{inva}
\int_{K_0}f(\Phi^{H_{m_j}}_t(x,u,p))\ d\eta_{m_j}=\int_{K_0}f(x,u,p)\ d\eta_{m_j}, \quad \forall f\in C(K_0),\ \forall j\in\mathbb{N}.
\end{align}
Note that $H$ is of class $C^3$, $F$ satisfies (F1) and (F2).
Since $K_0$ is compact, then by the continuous dependence of the solutions on the initial condition and a parameter, we get that
\[
\lim_{j\to\infty}f(\Phi^{H_{m_j}}_t(x,u,p))=f(\Phi^{H_{m}}_t(x,u,p))
\]
uniformly on $K_0$. Thus, by \eqref{inva}, we deduce that
\begin{align*}
\int_{K_0}f(\Phi^{H_{m}}_t(x,u,p))\ d\eta=\int_{K_0}f(x,u,p)\ d\eta, \quad \forall f\in C(K_0),	
\end{align*}
which shows that $\eta$ is  $\Phi^{H_m}_t$-invariant. Next, we show that $\supp(\eta)\subset \mathcal{K}_{H_m}=\{(x,u_m(x),0):H_m(x,u_m(x),0)=0\}.
$
Since $\eta_{m_j}\stackrel{w^*}{\longrightarrow}\eta$ as $j\to\infty$, for any $(x_0,u_0,p_0)\in \supp(\eta)$, there is a sequence of points $(x_j,u_j,p_j)\in\supp(\eta_{m_j})$ with $(x_j,u_j,p_j)\to (x_0,u_0,p_0)$ as $j\to\infty$. By Proposition \ref{key2} and $\eta_{m_j}\in\mathfrak{M}_{m_j}$, we deduce that
$u_j=u_{m_j}(x_j)$, $p_j=0$, and that
\[
H(x_j,u_{m_j}(x_j),0)-F(x_j,m_j)=0
\]
for all $j\in\mathbb{N}$. By Proposition \ref{key3}, the equi-Lipschitz property of $\{u_{m_j}\}$ and (F1), we get that
\[
 H(x_0,u_{m}(x_0),0)-F(x_0,m)=0,
\]
which shows that $\supp(\eta)\subset \mathcal{K}_{H_m}$. Thus, $\eta\in\mathfrak{M}_{m}$.	 
	 So far, we have proved that $\Psi$ has closed graph.  
	 By Kakutani's  theorem, there exists $\bar m\in \mathcal{P}(M)$ such that $\bar m\in\Psi(\bar m)$.

Denote by  $\bar{u}$ the unique viscosity solution of $u+H_{\bar{m}}(x,Du)=0$. From the arguments in Section 2, $\bar u$ is differentiable $\bar m$ -a.e since $\bar m$ is supported on a subset of the projected Mather set $\mathcal{M}_{H_{\bar m}}$.

	 For any $x\in \supp(\bar m)$, let $\gamma_{t}(x)=\pi_x\circ \Phi_{t}^{H_{\bar{m}}}(x,\bar{u}(x),D\bar u(x))$.
	 Then, we have that 
	 $$\frac{d}{dt}\gamma_{t}(x)=\frac{\partial H_{\bar m}}{\partial p}\left(\gamma_{t}(x),\bar{u}(\g_t(x)), D\bar u(\gamma_{t}(x))\right).$$ 
	 Since the map $\pi_x: \supp(\eta_{\bar m}) \to \supp(\bar m)$ is one-to-one and its inverse is given by $x \mapsto (x, \bar{u}(x), D\bar u(x)))$ on $\supp(\bar m)$, then $\gamma_t:\supp(\bar m)\to \supp(\bar m)$ is a bijection for each $t\in\R$. Note that, for each $t\in\R$ and any function $f \in C^{1}(M)$, we get that
	 \begin{align*}\label{4-509}
	 	\begin{split}
	 \int_{\supp(\bar m)}f(\gamma_t(x))d\bar m&=\int_{\supp(\bar m)}f\circ \gamma_t(x)d\pi_x\sharp\eta_{\bar m}\\&=\int_{\supp(\eta_{\bar m})}f\circ \gamma_t(\pi_x(x,u,p))d\eta_{\bar m}\\
	 &=\int_{\supp(\eta_{\bar m})}f (\pi_x\circ \Phi^{H_{\bar m}}_t(x,u,p))d\eta_{\bar m}\\
	 &=\int_{\supp(\eta_{\bar m})}f (\pi_x(x,u,p))d\eta_{\bar m}\\
	 &=\int_{\supp(\bar m)}f (x)d\bar m.
	 \end{split}
	 \end{align*}
	 Here, the first equality holds  since $\bar m$ is a fixed point of $\Psi$, the second one holds by the property of the push-forward, the third holds since $\gamma_t$ is a bijection, the fourth one comes from the $\Phi^{H_{\bar m}}_t$-invariance property of $\eta_{\bar m}$, and the last one is again due to the property of the push-forward.
	 So, for any function $f \in C^{1}(M)$, one can deduce that
	 \begin{align*} 
	 0 &=\frac{d}{dt}\int_{M}{f(\gamma_{t}(x))\ d\bar m(x)}= \int_{M}{\big\langle Df(\gamma_{t}(x)), \frac{\partial H_{\bar m}}{\partial p}(\gamma_{t}(x),\bar{u}(\g_t(x)), D\bar u(\gamma_{t}(x))) \big\rangle_x\ d\bar m(x)} \\&= \int_{M}{\big\langle Df(x), \frac{\partial H_{\bar m}}{\partial p}(x,\bar{u}(x), D\bar u(x)) \big\rangle_x\ d\bar m(x)}. 
	 \end{align*} 
	 Hence, $\bar m$ satisfies the continuity equation which  completes the proof.

	  \end{proof}

\medskip
%%%%%%%%%%%%%%%%%%%%%%%%%%%%%%%%%%%%%%%%%%
\noindent {\bf Acknowledgements:}
Kaizhi Wang is supported by NSFC Grant No. 11771283, 11931016 and Innovation Program of Shanghai Municipal Education Commission No. 2021-01-07-00-02-E00087.

%\bibliographystyle{plain}
%\bibliography{kzwang.bib}

%%%%%%%%%%%%%%%%%%%%%%%%%%%%%%%%%%%%%%%%%%%

\end{document}